\font\smallit=cmti10
\renewcommand\section{\@startsection {section}{1}{\z@}
{-30pt \@plus -1ex \@minus -.2ex}
{2.3ex \@plus.2ex}
{\normalfont\normalsize\bfseries\boldmath}}
\renewcommand\subsection{\@startsection{subsection}{2}{\z@}
{-3.25ex\@plus -1ex \@minus -.2ex}
{1.5ex \@plus .2ex}
{\normalfont\normalsize\bfseries\boldmath}}
\renewcommand{\@seccntformat}[1]{\csname the#1\endcsname. }
\newtheorem{theorem}{Theorem}
\newtheorem{lemma}{Lemma}
\newtheorem{proposition}{Proposition}
\theoremstyle{definition}
\newtheorem{remark}{Remark}
\newcommand{\coeff}{\mathrm{coeff}}
\newcommand{\ee}{\mathrm{e}}
\renewcommand{\AA}{\mathtt{A}}
\newcommand{\BB}{\mathtt{B}}
\newcommand{\QQ}{\mathbb{Q}}
\newcommand{\denom}{\mathrm{denom}}
\newcommand{\lcm}{\mathrm{lcm}}
\begin{document}


\begin{center}
\uppercase{\bf Smallest common denominators for the homogeneous 
components of the Baker--Campbell--Hausdorff series}
\vskip 20pt
{\bf Harald Hofst\"atter}\\ 
{\smallit Reitschachersiedlung 4/6, 7100 Neusiedl am See, Austria}\\
{\tt hofi@harald-hofstaetter.at}\\ 
\end{center}
\vskip 20pt



\centerline{\bf Abstract}

\noindent
In a recent paper the author derived a formula for calculating
common denominators for the homogeneous components of the
Baker--Campbell-Hausdorff (BCH) series. 
In the present work it is proved that this formula actually yields the  smallest
such common denominators. In an appendix a new efficient algorithm for
 computing  coefficients of the BCH series is presented, which is 
based on these  common denominators, and requires only integer arithmetic rather than less efficient rational arithmetic.

\pagestyle{myheadings}
\thispagestyle{empty}
\baselineskip=12.875pt
\vskip 30pt

\section{Introduction}
We continue the 
investigations from our recent paper \cite{HHdenom} 
on the
Baker--Campbell--Hausdorff (BCH) series, which is  formally defined as the element 
\begin{equation*}
H = \log(\ee^{\AA}\ee^{\BB})
= \sum_{k=1}^\infty\frac{(-1)^{k+1}}{k}\big(\ee^{\AA}\ee^{\BB}-1\big)^k
= \sum_{k=1}^\infty\frac{(-1)^{k+1}}{k}\bigg(\sum_{i+j>0}\frac{1}{i!j!}\AA^i\BB^j\bigg)^k 
\end{equation*}
in the ring $\QQ\langle\langle\AA,\BB\rangle\rangle$  of formal power series 
in the non-commuting variables $\AA$ and $\BB$ with rational coefficients.
The BCH series can be written as a sum $H=\sum_{n=1}^{\infty}H_n$
of homogeneous components 
\begin{equation*}
H_n= \sum_{w\in\{\AA,\BB\}^n} h_w w,\quad n=1,2,\dots,
\end{equation*}
where $\{\AA,\BB\}^n$
denotes the finite set of all words $w=w_1\cdots w_n$ ($w_i\in\{\AA,\BB\}$) of length (degree) $n$ over the 
alphabet $\{\AA, \BB\}$, and
$h_w=\coeff(w,H)$ denotes the coefficient of such a word in 
$H$.
The main result of the present paper is the following strengthening
of  Theorem~1 of  \cite{HHdenom} on common denominators for these homogeneous components $H_n$.
\begin{theorem}\label{Thm:MainTheorem}
For $n\geq 1$ and prime $p\geq 2$, let
\begin{equation}\label{eq:l_n_p}
l(n,p)=\max\{t:\  p^t\leq s_p(n)\} = \lfloor\log_p(s_p(n))\rfloor,
\end{equation}
where $s_p(n)=\alpha_0+\alpha_1+\ldots+\alpha_r$ is  the sum of
the digits in the $p$-adic expansion  $n=\alpha_0+\alpha_1p+\ldots+\alpha_rp^r$, $0\leq\alpha_i\leq p-1$.
Define 
\begin{equation}\label{eq:d_n}
d_n = \prod_{p\ \mathrm{prime}, \ p<n}
p^{l(n,p)},\quad n=1,2,\dots.
\end{equation}
Then $n!\,d_n$ is the smallest common denominator
 of all coefficients of words of length $n$ in 
 the Baker--Campbell--Hausdorff series $H=\log(\ee^{\AA}\ee^{\BB})$, 
 or, equivalently,\footnote{Here, $\lcm\,\mathcal{M}$ denotes the {\em least common multiple} of the elements of the finite set $\mathcal{M}\subset\mathbb{Z}$, and $\denom(r)$ 
for $r\in\mathbb{Q}$ is defined as the smallest
positive integer $d$ such that $r\cdot d\in\mathbb{Z}$.
In particular, $\denom(0)=1$.}
\begin{equation}\label{eq:lcm_smallest_denom}
\mathrm{lcm}\big\{\mathrm{denom}(\coeff(w,H)):\ w\in\{\AA,\BB\}^n
\big\}=n!\,d_n.
\end{equation}
\end{theorem}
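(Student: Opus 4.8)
The plan is to prove the two divisibilities in \eqref{eq:lcm_smallest_denom} separately. The divisibility $L_n \mid n!\,d_n$, where $L_n$ denotes the left-hand side of \eqref{eq:lcm_smallest_denom}, is nothing but Theorem~1 of \cite{HHdenom}, which asserts that $n!\,d_n$ is a common denominator of the coefficients $\coeff(w,H)$, $w\in\{\AA,\BB\}^n$. Hence the whole content of the theorem is the reverse divisibility $n!\,d_n \mid L_n$, and I would reduce it prime by prime to the following: for every prime $p$ with $p\le n$ there is a word $w_p\in\{\AA,\BB\}^n$ with
\[ v_p\big(\coeff(w_p,H)\big) = -\big(v_p(n!)+l(n,p)\big), \]
where $v_p$ is the $p$-adic valuation. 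Since the opposite inequality $v_p(\coeff(w,H))\ge -(v_p(n!)+l(n,p))$ holds for every $w$ — this is exactly the already-proved direction, rewritten via $\denom$ — it suffices to produce, for each $p$, a word whose coefficient has $v_p \le -(v_p(n!)+l(n,p))$. Given such witnesses, $v_p(L_n)\ge v_p(n!\,d_n)$ for every prime $p$, so $n!\,d_n\mid L_n$ and \eqref{eq:lcm_smallest_denom} follows. (For primes $p>n$ there is nothing to prove, since then $v_p(n!\,d_n)=0$.)

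To build the witnesses $w_p$ I would use words of simple combinatorial shape, primarily the "staircase" words $\AA^a\BB^{n-a}$ and, where these do not suffice, words with a small number of short runs, for which the classical Goldberg formula is available. For a staircase word one writes $\ee^{\AA}\ee^{\BB}-1$ as the sum of its pure-$\AA$, pure-$\BB$, and mixed parts, expands $\log(1+\cdot)$, and sees that $\coeff(\AA^a\BB^{n-a},H)$ collapses to a single sum over $k$ of $\tfrac{(-1)^{k+1}}{k}$ times products of Stirling numbers of the second kind — numbers of Bernoulli type obeying a von Staudt--Clausen-type law; in particular $\coeff(\AA\BB^{n-1},H)$ equals (up to the value at $n=2$) $B_{n-1}/(n-1)!$, already forcing $p$ into the denominator whenever $(p-1)\mid(n-1)$. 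The $p$-adic valuation of such a coefficient is then computable: Legendre's formula gives $v_p((n-a)!)=\big((n-a)-s_p(n-a)\big)/(p-1)$ and Kummer's theorem controls the binomial/Stirling parts through base-$p$ carries, while the von Staudt--Clausen-type input handles the Bernoulli part. I would then choose the split $a+(n-a)=n$ depending on $p$ so that these contributions sum to exactly $v_p(n!)+l(n,p)$.

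The hard part will be the factor $p^{l(n,p)}$ with $l(n,p)=\lfloor\log_p s_p(n)\rfloor$, which is precisely what distinguishes this theorem from \cite{HHdenom}. When $l(n,p)=0$ the target valuation $v_p(n!)$ is already supplied by a factorial $1/(n-a)!$ (or $1/a!$) surviving in a suitable staircase coefficient, so there is essentially nothing extra to do. But when $s_p(n)\ge p$ one must exhibit a coefficient whose denominator exceeds the $n!$-valuation by the full $l(n,p)$; these extra powers of $p$ originate in the weights $1/k$ of the $\log$-series — the same place the upper-bound argument of \cite{HHdenom} meets them — and the crux is to select $w_p$, presumably with a specially adapted (and rather self-similar) run structure, so that the term of index $k=p^{l(n,p)}$ contributes its full $p^{-l(n,p)}$ to $\coeff(w_p,H)$ with no cancellation against the remaining terms, i.e.\ so that the relevant Bernoulli-type number is a $p$-adic unit. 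Establishing this non-cancellation is a finite but delicate $p$-adic estimate; once it is in hand, the theorem follows by assembling the per-prime witnesses as above.
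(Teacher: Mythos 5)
Your reduction is exactly the paper's: the divisibility $L_n\mid n!\,d_n$ is quoted from \cite{HHdenom}, and the theorem is then equivalent to exhibiting, for each prime $p\le n$, a witness word $w_p$ with $v_p(\denom(\coeff(w_p,H)))=v_p(n!)+l(n,p)$. Your treatment of the case $l(n,p)\le 1$ is also essentially the paper's: two-block words $\AA^{n-k}\BB^k$, Goldberg's Bernoulli-number formula, von Staudt--Clausen, and a choice of $k$ governed by base-$p$ carries. What you leave open there is the actual choice of $k$: one needs $p-1\mid k$ (so that the Bernoulli sum $\sum_j\binom{k}{j}B_{n-j}$ has valuation exactly $-1$, which requires the Glaisher and Hermite--Bachmann congruences for $\sum_{j\equiv r\,(p-1)}\binom{k}{j}$, not just von Staudt--Clausen applied to a single $B_{n-1}$) \emph{and} simultaneously $v_p\binom{n}{k}=1$ when $l(n,p)=0$, respectively $v_p\binom{n}{k}=0$ when $l(n,p)=1$; the existence of such $k$ is a genuine (if elementary) digit-manipulation argument (Lemmas~\ref{lemma:1} and~\ref{lemma:2} of the paper), not automatic.

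The substantive gap is the case $l(n,p)\ge 2$, which you explicitly defer (``once it is in hand, the theorem follows''). This is where all the new difficulty lives, and your sketch points in a slightly wrong direction: no word with ``a small number of short runs'' can work, since for $m=2$ the squarefreeness of Bernoulli denominators caps the excess at $l=1$. The paper's witness has $m=p^{l}$ or $p^{l}+1$ blocks, with $q_2,\dots,q_m$ taken to be powers of $p$ read off from the $p$-adic digits of $n$ and $q_1$ the remainder; the denominator $p^{l}$ then arises from the terms of Goldberg's sum in which $j_1+\dots+j_m-k$ is divisible by $p^{l}$, which is only reachable because $m$ itself is of size $p^{l}$. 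The non-cancellation is then not ``a finite $p$-adic estimate'' in any soft sense: it requires (i) a valuation inequality $v_p(j_1!\cdots j_m!)\ge v_p(\lfloor(j_1+\dots+j_m)/p^{l}\rfloor)$ with a complete classification of the equality cases (Lemma~\ref{lemma:vp_fac_sum}), (ii) Howard's congruence $S(p^e,j)\bmod p$ to kill all contributions except $j_2=\dots=j_m=1$, and (iii) Lucas's theorem plus a Vandermonde-type identity to evaluate the surviving unit $a\bmod p$ explicitly as $\pm 2\left(\tfrac{p-1}{2}\right)^n$ or $-\left(\tfrac{p-1}{2}\right)^{n-1}$, with a separate parity analysis deciding between $m=p^l$ and $m=p^l+1$ according to whether $n$ is even or odd (and special handling of $p=2$). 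None of this is supplied or even outlined in the proposal, so as it stands it is a correct plan whose central step is missing.
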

 The weaker statement that $n!\,d_n$ is a (not necessarily the
smallest) common denominator, or, equivalently, 
\begin{equation}\label{eq:lcm_denom}
\mathrm{lcm}\big\{\mathrm{denom}(\coeff(w,H)):\ w\in\{\AA,\BB\}^n
\big\}\ | \ n!\,d_n
\end{equation}
was proved in \cite{HHdenom}, where it was also proved that
\begin{equation}
n!\,d_n = \lcm\{k\, j_1!\cdots j_k!:\ j_i\geq 1, \ j_1+\ldots+j_k=n,\ k=1,\dots,n\}.
\end{equation}

The first few values of $d_n$ are
\begin{equation*}
d_n = 1, 1, 2, 1, 6, 2, 6, 3, 10, 2, 6, 2, 210, 30, 12, 3, 30, 10, 210, 42, 330, 30, 60, 30, 546, \ldots, 
\end{equation*}
see 
\cite[A338025]{SloaneOEIS}. 

\begin{algorithm}
\caption{Construction of a specific word $w(n,p)$  that satisfies (\ref{eq:vp_wnp})}
\label{alg:witness}
\DontPrintSemicolon
\KwIn{$n\geq 1$ and $p\geq 2$ prime} 
\KwOut{$(q_1,\dots, q_m)$ such that $w(n,p)=\AA^{q_1}\BB^{q_2}\AA^{q_3}\cdots (\AA\lor\BB)^{q_m}$
satisfies (\ref{eq:vp_wnp}) }
Determine $r$, $\alpha_0,\dots,\alpha_r$ such that 
$n=\alpha_0+\alpha_1p+\ldots+\alpha_rp^r$, $0\leq \alpha_i\leq p-1$\;
$s:=\alpha_0+\ldots+\alpha_r$ \Comment*[r]{\ $s=s_p(n)$}
$l:=
\lfloor\log_p(s)\rfloor$ 
\Comment*[r]{\ $l=l(n,p)$ as defined in (\ref{eq:l_n_p})}
\uIf{$l=0$} {
	\uIf(\Comment*[f]{see the paragraph after Remark~2}){$n<p$} {
		\uIf{$n=1$} {
			{\bf return} $(1)$
		}
		\uElseIf{$n=2$ {\bf or} $n$ \rm odd} {
			{\bf return} $(n-1, 1)$
		}
		\Else {
			{\bf return} $(n-2, 2)$
		}
	}
	\Else(\Comment*[f]{define $k$ as in  Lemma~\ref{lemma:1}}){
		$k:=p^{r-1}(p-1)$\;
		{\bf return} $(n-k,k)$
	}
}
\uElseIf(\Comment*[f]{define $k$ with $1\leq k\leq n$ and $p-1\mid k$ as in  Lemma~\ref{lemma:2}}){$l=1$} {
	$h:=p-1$\; 
	$i:=0$\;
	\While{$h> 0$} {
		$\beta_i:=\min\{h,\alpha_i\}$\;
        $h:=h-\beta_i$\;
        $i:=i+1$        		
	}
	$k:=\beta_0+\beta_1p+\ldots+\beta_{i-1}p^{i-1}$\;
	{\bf return} $(n-k,k)$
}
\Else(\Comment*[f]{define  $(q_1,\dots,q_m)$ as in 
 Section~\ref{SubSec:l_gt_1}}) {
	\uIf{$p=2$ {\bf or} $n$ \rm odd} {
		$m:=p^l$ \Comment*[r]{$a\not\equiv 0\ (\mathrm{mod}\ p )$ in (\ref{eq:cauv0}) for $p=2$ or $n$ odd}
	}
	\Else {
		$m:=p^l+1$	\Comment*[r]{
		$a\not\equiv 0\ (\mathrm{mod}\ p )$ in (\ref{eq:cauv1}) for $p\neq 2$}
	}
	$(b_1,\ldots,b_{s}):=(\underbrace{p^0,\dots,p^0}_{\alpha_0},
	 	 \underbrace{p^1,\dots,p^1}_{\alpha_1},
	 	 \dots,
	 	 \underbrace{p^r,\dots,p^r}_{\alpha_r})$\;
	{\bf return} 
		$(b_{m}+b_{m+1}+\ldots+ b_{s}, b_{m-1},b_{m-2},\dots, b_1)$
}
\end{algorithm}
For the proof of Theorem~\ref{Thm:MainTheorem} 
we  explicitly
construct, for each degree $n$ and each prime $p\geq 2$,
a specific word $w(n,p)\in\{\AA,\BB\}^n$, 
for which 
\begin{equation}\label{eq:vp_wnp}
v_p(\denom(\coeff(w(n,p), H)))
=v_p(n!)+l(n,p)=v_p(n!\,d_n),
\end{equation}
where $v_p(k)$ denotes
the exponent of the
highest power of $p$ that divides  $k$.
Since 
$v_p(\denom(\coeff(w, H)))\leq v_p(n!\,d_n)$,
$w\in\{\AA,\BB\}^n$
 by (\ref{eq:lcm_denom}), 
this  implies
\begin{equation}\label{eq:max_vp}
\max_{w\in\{\AA,\BB\}^n}v_p(\denom(\coeff(w, H)))=v_p(n!\,d_n),\quad
p\geq 2\ \mbox{prime},
\end{equation}
and thus
(\ref{eq:lcm_smallest_denom})
by unique factorization.
An overview of the construction
of $w(n,p)$ is provided by 
Algorithm~\ref{alg:witness}.\footnote{Algorithm~\ref{alg:witness} serves as a guide 
for the proof of Theorem~\ref{Thm:MainTheorem} in Section~\ref{Sec:proof}. It 
is 
detailed enough for a direct implementation in a computer algebra system. In particular, it yields a  definite $w(n,p)$ for which the discussion
in Section~\ref{Sec:proof} would allow several possibilities.}
The details of the proof of Theorem~\ref{Thm:MainTheorem}
are given  in Section~\ref{Sec:proof}.
Although these details 
may seem rather long and technical, the proof
still has a certain appeal as it uses classical results  of
Lucas, von Staudt and Clausen, Hermite and Bachmann,  Glaisher, etc.~on divisibility and congruence properties of binomial coefficients, Bernoulli numbers and Stirling numbers of the second kind. 

In Section~\ref{Sec:numerical_illustrations} we 
illustrate some of our results of Section~\ref{Sec:proof}
by  explicit computations.
These computations are based on
a new efficient algorithm for the computation of coefficients of the BCH series, which is described in detail in the appendix, and
which uses the common 
denominators $n!\,d_n$ 
in an essential way. This brings us back to our original motivation for the investigation of such common denominators in \cite{HHdenom}. The algorithm can
be implemented in a straightforward way, it performs all computations in integer arithmetic, and, unlike the algorithm described in \cite{VanBruntVisser},
it requires no symbolic manipulation software.

\section{Proof of Theorem~\ref{Thm:MainTheorem}}\label{Sec:proof}
We base our investigations on  explicit formulas 
due to Goldberg \cite{G} for the coefficients
of the BCH series, which are given by the following two propositions.
\begin{proposition}
Let $w=\AA w_2\cdots w_n \in \{\AA, \BB\}^n$ be a word of degree $n\geq 1$ starting
with the letter $\AA$. 
Let $q_1,\dots,q_m\geq 1$ with $q_1+\ldots+q_m=n$ such that 
\begin{equation}\label{eq:wAq}
w=\AA^{q_1}\BB^{q_2}\AA^{q_3}\cdots (\AA\lor\BB)^{q_m}, 
\end{equation}
where 
$\AA\lor\BB$ denotes $\AA$ if $m$ is odd and $\BB$ if
$m$ is even.
Then
\begin{equation*}
\coeff(w, \log(\ee^{\AA}\ee^{\BB})) = c(q_1,\dots,q_m)
= \frac{(-1)^n}{q_1!\cdots q_m!}\widetilde{c}(q_1,\dots,q_m)
\end{equation*}
with
\begin{align}
\widetilde{c}(q_1,&\dots, q_m) = \nonumber \\ \label{eq:coeff_expl}
&
\sum_{k=0}^{\widetilde{m}}
\!
\sum_{\substack{1\leq j_1\leq q_1,\\ \dots,\\1\leq j_m\leq q_m}}
\!\!\!\!\!
(-1)^{j_1+\ldots+j_m-k}
\genfrac(){0pt}{}{\widetilde{m}}{k}\frac{j_1!\cdots j_m!}{j_1+\ldots+j_m-k}
S(q_1,j_1)\cdots S(q_m,j_m),
\end{align}
where
\begin{equation*}
\widetilde{m} = \left\lfloor\frac{m-1}{2}\right\rfloor,
\end{equation*}
and the  $S(q,j)$ denote Stirling numbers of the second kind defined by
\begin{equation}\label{eq:stirling}
S(q,j) = \frac{1}{j!}\sum_{i=0}^j(-1)^{j-i}\genfrac(){0pt}{}{j}{i}i^q.
\end{equation}
\end{proposition}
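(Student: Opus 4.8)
The plan is to derive Goldberg's formula directly from the power-series definition of $H$, by converting the extraction of $\coeff(w,H)$ into a weighted count of factorizations of the word $w$ and then resumming with the exponential generating function for Stirling numbers of the second kind. First I would write $H=\log(1+X)$ with $X=\ee^{\AA}\ee^{\BB}-1=\sum_{i+j\geq1}\frac{1}{i!\,j!}\AA^i\BB^j$, so that
\[
\coeff(w,H)=\sum_{\nu\geq1}\frac{(-1)^{\nu-1}}{\nu}\,\coeff(w,X^\nu).
\]
Expanding $X^\nu$ shows that $\coeff(w,X^\nu)$ is the sum of $1/(i_1!\,j_1!\cdots i_\nu!\,j_\nu!)$ over all factorizations $w=\AA^{i_1}\BB^{j_1}\cdots\AA^{i_\nu}\BB^{j_\nu}$ into $\nu$ nonempty syllables $\AA^{i_l}\BB^{j_l}$. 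Since every syllable has its $\AA$'s before its $\BB$'s, no syllable can straddle a $\BB\to\AA$ transition of $w$, so each such transition is forced to be a cut between consecutive syllables, whereas the remaining cuts are free. The word $w=\AA^{q_1}\BB^{q_2}\cdots$ has exactly $\widetilde m=\lfloor(m-1)/2\rfloor$ transitions of type $\BB\to\AA$, which already explains the appearance of $\widetilde m$.

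Next I would factor the count over segments. The forced cuts split $w$ into $\widetilde m+1$ segments, each of the form $\AA^a\BB^b$, and since any contiguous piece of $\AA^a\BB^b$ is again of the form $\AA^*\BB^*$, both the free cuts and the syllable weights factor over these segments. For a single run of length $q$ the classical identity $\sum_{i_1+\cdots+i_g=q,\,i_l\geq1}q!/(i_1!\cdots i_g!)=g!\,S(q,g)$, i.e.\ the coefficient of $x^q/q!$ in $(\ee^x-1)^g$, lets me assemble the generating function $\sigma_q(z)=\sum_{g\geq1}g!\,S(q,g)\,z^g$ recording the splitting of that run into $g$ marked pieces. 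Combining the $\AA$-run and the $\BB$-run of a segment $\AA^a\BB^b$ then produces, up to the factor $1/(a!\,b!)$, the product $\sigma_a(z)\,\sigma_b(z)$ times an extra factor $(1+z)$ that records whether the internal $\AA\to\BB$ junction of the segment is a cut.

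Finally I would resum over $\nu$. Writing $1/\nu=\int_0^1 t^{\nu-1}\,\dd t$ turns the alternating sum over $\nu$ and over the per-segment block counts into $\int_0^1$ of the product of the per-segment generating functions evaluated at $z=-t$; each factor $\sigma_{q_s}(-t)=\sum_{j_s}j_s!\,S(q_s,j_s)(-t)^{j_s}$ contributes the terms $j_s!\,S(q_s,j_s)$ and the signs $(-1)^{j_1+\cdots+j_m}$. Expanding the accumulated factor $(1+z)^{\widetilde m}$ by the binomial theorem and integrating $\int_0^1 t^{\,J-k-1}\,\dd t=1/(J-k)$ with $J=j_1+\cdots+j_m$ produces precisely the factor $\binom{\widetilde m}{k}/(J-k)$; the prefactor $(-1)^n/(q_1!\cdots q_m!)$ then emerges from the overall sign together with the $1/q_s!$ that turn each $j_s!\,S(q_s,j_s)$ into the weighted composition count. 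Since the whole computation depends on $w$ only through $(q_1,\dots,q_m)$, this also confirms that $\coeff(w,H)=c(q_1,\dots,q_m)$.

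I expect the main obstacle to be the bookkeeping in this last step. The number of $(1+z)$ factors equals the number of segments carrying a nonempty $\BB$-run, which is $\widetilde m$ exactly when $w$ ends in $\AA$ (that is, $m$ odd), but $\widetilde m+1$ when $w$ ends in $\BB$ ($m$ even); reconciling the even case with the stated upper limit $\widetilde m$ requires an additional reindexing identity and careful treatment of the terminal run. Keeping the signs correct through the substitution $z=-t$ and the integration is the other delicate point, while the Stirling reduction and the Beta-type integral are routine once the combinatorial model is in place.
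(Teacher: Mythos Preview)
Your plan is sound but takes a genuinely different route from the paper. The paper's proof is only a few lines: it quotes Goldberg's Theorem~1 from \cite{G}, which already gives the coefficient as the integral
\[
\coeff(w,H)=\int_0^1 t^{m-\widetilde m-1}(t-1)^{\widetilde m}\,G_{q_1}(t)\cdots G_{q_m}(t)\,\dd t,
\qquad
G_q(t)=\frac{(-1)^q}{q!}\sum_{j=1}^q(-1)^j j!\,S(q,j)\,t^{j-1},
\]
and then simply multiplies out the product, expands $(t-1)^{\widetilde m}$ by the binomial theorem, integrates term by term, and reindexes $k\mapsto \widetilde m-k$. No combinatorics is done; Goldberg's result is used as a black box.

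What you are proposing is essentially a self-contained re-derivation of Goldberg's theorem: the factorization of $w$ at its forced $\BB\!\to\!\AA$ cuts, the Stirling identity $\sum_{i_1+\cdots+i_g=q}q!/(i_1!\cdots i_g!)=g!\,S(q,g)$, the $(1+z)$ bookkeeping at each internal $\AA\!\to\!\BB$ junction, and the substitution $1/\nu=\int_0^1 t^{\nu-1}\dd t$ are exactly the ingredients that produce Goldberg's integral (your $\sigma_{q}(-t)$ is $t$ times $(-1)^q q!\,G_q(t)$, and the accumulated $(1+z)^{\widetilde m}/z^{\widetilde m}$ together with the extra $1/t$ from the integral trick give the factor $t^{m-\widetilde m-1}(t-1)^{\widetilde m}$). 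Your route is more work but buys independence from the reference; the paper's route is immediate once one grants \cite{G}. The even-$m$ discrepancy you flag is real but harmless: when $m$ is even you get $(1+z)^{\widetilde m+1}$, and a single application of $\binom{\widetilde m+1}{k}=\binom{\widetilde m}{k}+\binom{\widetilde m}{k-1}$ together with the shift $k\mapsto k+1$ in the second piece collapses the two resulting sums back to the stated range $0\le k\le\widetilde m$; alternatively, note that for $m$ odd and $n$ even the coefficient vanishes anyway (Remark~\ref{rem:1}), so the sign bookkeeping you worry about is vacuous in that parity.
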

\begin{proof}
The proposition 
 follows by simple manipulations from Theorem~1 of
\cite{G}, which states that (using denotations from \cite{G})
\begin{equation*}
\coeff(w, \log(\ee^{\AA}\ee^{\BB})) =  \int_0^1 
t^{m'}(t-1)^{m''}
G_{q_1}(t)\cdots G_{q_m}(t)\,\mathrm{d}t,
\end{equation*}
where $m'=m-\widetilde{m}-1$, $m''=\widetilde{m}$, and
$G_q(t) = \sum_{j=1}^q (-1)^{q-j}\alpha_q^{(j)}t^{j-1}$, 
$\alpha_q^{(j)}=\frac{j!}{q!}S(q,j)$, 
such that
\begin{equation*}
G_q(t) = \frac{(-1)^q}{q!}\sum_{j=1}^{q}(-1)^j j!S(q,j)t^{j-1}.
\end{equation*}
\end{proof}
\begin{remark}\label{rem:1} For words $w=\BB w_2\cdots w_n$ starting with 
$\BB$ we have $\coeff(w, \log(\ee^{\AA}\ee^{\BB})) = (-1)^{n+1}c(q_1,\dots,q_m)$ with $q_i$ analogously defined as in (\ref{eq:wAq}) and $c(q_1,\dots,q_m)$ again given by (\ref{eq:coeff_expl}), see \cite{G}.
Also note that $c(q_1,\dots,q_m)$ is invariant under permutations of the $q_i$.
If follows that if $n$ is even and $m$ is odd, then $c(q_1,\dots,q_m)=0$.
\end{remark}
\begin{proposition}\label{prop:m2_expl}
For words of the form $w=\AA^{n-k}\BB^k\in\{\AA,\BB\}^n$, $1\leq k\leq n-1$, 
$n\geq 2$
we have
\begin{equation}\label{eq:m2_expl}
\coeff(w,\log(\ee^\AA\ee^\BB))=c(n-k,k) = 
\frac{(-1)^{n+k}}{n!}\genfrac(){0pt}{}{n}{k}\sum_{j=1}^k\genfrac(){0pt}{}{k}{j}B_{n-j},
\end{equation}
where $B_n$ denote the Bernoulli numbers ($B_1=-\tfrac{1}{2}$).
\end{proposition}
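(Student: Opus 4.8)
\emph{Proof plan.} The plan is to specialise the coefficient formula of the first Proposition above to $m=2$ and then verify the resulting scalar identity by evaluating the bivariate generating function of each side in closed form. First I would note that for a word $w=\AA^{n-k}\BB^k$ one has $m=2$, hence $\widetilde m=\lfloor\tfrac12\rfloor=0$; thus in (\ref{eq:coeff_expl}) only the outer term $k=0$ survives, and the integral representation of $\coeff(w,\log(\ee^{\AA}\ee^{\BB}))$ recalled in the proof of that Proposition reduces, for $m'=1$ and $m''=0$, to $c(n-k,k)=\int_0^1 t\,G_{n-k}(t)\,G_k(t)\,\dd t$. Writing $u=1-\ee^{-x}$ and $v=1-\ee^{-y}$, the elementary Stirling generating function $\sum_{q\ge j}S(q,j)z^q/q!=(\ee^{z}-1)^j/j!$ gives $\sum_{q\ge 1}G_q(t)\,x^q=u/(1-tu)$, so that $\sum_{a,b\ge 1}c(a,b)\,x^a y^b=\int_0^1\frac{t\,u\,v}{(1-tu)(1-tv)}\,\dd t$. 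Decomposing $t/\bigl((1-tu)(1-tv)\bigr)$ by partial fractions in $t$, integrating, and using $\log(1-u)=-x$ and $\log(1-v)=-y$, the integral collapses to $(xv-yu)/(u-v)$; multiplying numerator and denominator by $\ee^{x+y}$ one obtains
\[
\sum_{a\ge 1}\sum_{b\ge 1}c(a,b)\,x^a y^b=\frac{(x-y)\ee^{x+y}-x\,\ee^{x}+y\,\ee^{y}}{\ee^{x}-\ee^{y}} .
\]

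Next I would compute the generating function of the right-hand side of (\ref{eq:m2_expl}). Putting $a=n-k$, $b=k$ (so that $(-1)^{n+k}=(-1)^{a}$ and $\binom nk/n!=1/(a!\,b!)$), reindexing the inner sum by $i=b-j$ to turn $\sum_{j=1}^{b}\binom bj B_{n-j}$ into $\sum_{i=0}^{b-1}\binom bi B_{a+i}$, and interchanging the order of summation, the double sum factors: the summation over $b$ contributes $\ee^{y}-1$, and what remains is $\sum_{n\ge 1}\tfrac{B_n}{n!}\bigl((y-x)^n-y^n\bigr)$. By $\sum_{n\ge 0}B_n z^n/n!=z/(\ee^{z}-1)$ this equals $\tfrac{(y-x)(\ee^{y}-1)}{\ee^{y-x}-1}-y$, so
\[
\sum_{n\ge 2}\ \sum_{k=1}^{n-1}\frac{(-1)^{n+k}}{n!}\binom nk\Bigl(\sum_{j=1}^{k}\binom kj B_{n-j}\Bigr)x^{n-k}y^k=\frac{(y-x)(\ee^{y}-1)}{\ee^{y-x}-1}-y .
\]

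It then remains to reconcile the two closed forms. Multiplying numerator and denominator of $\tfrac{(y-x)(\ee^{y}-1)}{\ee^{y-x}-1}$ by $\ee^{x}$ and simplifying yields exactly $\tfrac{(x-y)\ee^{x+y}-x\ee^{x}+y\ee^{y}}{\ee^{x}-\ee^{y}}+y$, so the two generating functions coincide; comparing the coefficient of $x^{n-k}y^k$ then establishes (\ref{eq:m2_expl}).

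The delicate part is the bookkeeping on the Bernoulli side — carrying out the reindexing, the interchange of summations, and the geometric summation while keeping every sign correct — together with the final algebraic identity between the two expressions rational in $\ee^{x}$ and $\ee^{y}$; the remainder is a routine use of the Stirling and Bernoulli exponential generating functions. An alternative that avoids generating functions would evaluate the double Stirling sum in (\ref{eq:coeff_expl}) with $m=2$ directly, via the classical identity $B_q=\sum_{j\ge 0}\tfrac{(-1)^j j!}{j+1}S(q,j)$ and a convolution argument, but the generating-function route keeps the $1/(j_1{+}j_2)$ coupling and the signs under better control.
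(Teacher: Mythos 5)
Your proposal is correct, and it takes a genuinely different route from the paper: the paper proves Proposition~2 simply by citing Goldberg's Theorem~3, whereas you rederive that result from Goldberg's integral representation $c(n-k,k)=\int_0^1 t\,G_{n-k}(t)G_k(t)\,\dd t$ (the $m=2$, $m'=1$, $m''=0$ case of the formula already quoted in the proof of Proposition~1) via bivariate generating functions. I checked the key steps: $\sum_{q\ge1}G_q(t)x^q=u/(1-tu)$ with $u=1-\ee^{-x}$ follows from $\sum_{q\ge j}S(q,j)z^q/q!=(\ee^z-1)^j/j!$; the partial-fraction integration gives $(xv-yu)/(u-v)$, which equals $\bigl((x-y)\ee^{x+y}-x\ee^x+y\ee^y\bigr)/(\ee^x-\ee^y)$; and on the Bernoulli side the substitution $i=b-j$, the factorization of the sum over $c=b-i$ into $\ee^y-1$, and the identity $\sum_{n\ge1}B_nz^n/n!=z/(\ee^z-1)-1$ (consistent with the convention $B_1=-\tfrac12$) yield $\tfrac{(y-x)(\ee^y-1)}{\ee^{y-x}-1}-y$, which after multiplying by $\ee^x/\ee^x$ agrees with the first closed form. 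What your approach buys is self-containedness: Proposition~2 is reduced to the same Goldberg integral that underlies Proposition~1, rather than resting on a separate external theorem; the cost is about a page of generating-function bookkeeping where the paper spends one line. One small point worth making explicit in a write-up: the intermediate expressions have a removable singularity at $u=v$ (equivalently $x=y$), so the comparison of coefficients should be phrased as an identity of formal power series in two independent variables, which the computation indeed delivers.
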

\begin{proof}
See \cite[Theorem~3]{G}.
\end{proof}

Depending on the integer $n\geq 1$ and the prime $p\geq 2$ we are now going to construct specific integers
$q_1,\dots, q_m\geq 1$  which satisfy 
 $q_1+\ldots+q_m=n$ and
\begin{equation}\label{eq:vp_den_c}
v_p(\denom(c(q_1,\dots,q_m)))=v_p(n!\,d_n)
=v_p(n!)+l(n,p).
\end{equation}
Then 
$\AA^{q_1}\BB^{q_2}\cdots(\AA\lor\BB)^{q_m}$
defined with these integers  is a suitable word $w(p,n)\in\{\AA,\BB\}^n$
for (\ref{eq:vp_wnp}), which 
suffices for 
the proof of Theorem~\ref{Thm:MainTheorem}.

Here and in the following 
$v_p(k)$ 
denotes the {\em $p$-adic valuation} of $k$, i.e., the exponent of the
highest power of the prime  $p$ that divides the integer $k$. By convention, $v_p(0)=\infty$. More generally, for rationals $u/v$ with  
$u, v\in\mathbb{Z}$, $v\neq 0$ (not necessarily in lowest terms),  $v_p(u/v)=v_p(u)-v_p(v).$ For the computation of $v_p$ for factorials we will
use {\em Legendre's formula}
\begin{equation*}
v_p(k!)=\frac{k-s_p(k)}{p-1}, \qquad k=1,2,\dots,
\end{equation*}
where, as in (\ref{eq:l_n_p}), $s_p(k)$ is the sum of the digits in the 
$p$-adic expansion of $k$, cf.~\cite{Mihet}.

\subsection{The case $l(n,p)\leq 1$}\label{SubSec:l_leq_1}
We  first assume  $l(n,p)\leq 1$, i.e., $s_p(n)<p^2$,  and deal with  the case 
$l(n,p)\geq 2$ afterwards.
We try to find an integer $k$ depending on
$n$, $p$,
such that (\ref{eq:vp_den_c}) holds with $m=2$,  $q_1=n-k$, $q_2=n$.
For this we first examine the sum in 
(\ref{eq:m2_expl}). 
\begin{proposition}\label{prop:sum_m_2_mod_p}
Let $p\geq 2$ prime, $n\geq 2$, and $1\leq k\leq n-1$. 
Let   $r$ be the unique integer such that $r\equiv n\ (\mathrm{mod}\ p-1)$ and $0\leq r\leq p-2$, and let $\widetilde{k}$ be the unique integer such that $\widetilde{k}\equiv k\ (\mathrm{mod}\ p-1)$ and $1\leq \widetilde{k} \leq p-1$.\footnote{Note that here $\widetilde{k}\geq 1$ is required, so that in particular $\widetilde{k} = p-1$ if $p-1\mid k$.}
Then there exist an integer $a$ 
(unique modulo $p$)
and a rational number $U$, $v_p(U)\geq 0$, 
such that 
\begin{equation}\label{eq:sum_binom_bernoulli}
(-1)^{n+k}(n-k)!k!c(n-k,k)=
\sum_{j=1}^{k}\genfrac(){0pt}{}{k}{j}B_{n-j} =-\frac{a}{p}+U
\end{equation}
and
\begin{equation}\label{eq:sum_binom_bernoulli_2}
a\equiv \left\{\begin{array}{rl}
\genfrac(){0pt}{}{\widetilde{k}}{r}
\   (\mathrm{mod}\ p), & \mbox{if $r\geq 1$}, \\
1\ \ (\mathrm{mod}\ p), & \mbox{if $r=0$, $p-1\mid k$},\\
 0\ \ (\mathrm{mod}\ p), & \mbox{if $r=0$, $p-1\nmid k$}.
\end{array}\right.
\end{equation}
\end{proposition}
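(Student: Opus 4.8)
\medskip
\noindent\textbf{Proof strategy.}
Since $\binom{n}{k}\,(n-k)!\,k!/n!=1$, the first equality in (\ref{eq:sum_binom_bernoulli}) is just Proposition~\ref{prop:m2_expl} rewritten, so the whole statement reduces to understanding the $p$-adic behaviour of $X:=\sum_{j=1}^{k}\binom{k}{j}B_{n-j}$. The plan is: (i) use the von Staudt--Clausen theorem to locate the non-$p$-integral terms of $X$ and read off a single rational $-a/p$ from them, which identifies $a$ modulo $p$ as a sum of binomial coefficients over an arithmetic progression; (ii) evaluate that sum modulo $p$; (iii) match the outcome with the three cases of (\ref{eq:sum_binom_bernoulli_2}).

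For (i), recall von Staudt--Clausen (with $B_1=-\tfrac12$ and $B_m=0$ for odd $m\geq3$): for $m\geq1$ one has $v_p(B_m)\geq0$ \emph{unless} either $p$ is odd and $m$ is a positive multiple of $p-1$, or $p=2$ and $m$ is even (or $m=1$), in which exceptional cases $p\,B_m\equiv-1\pmod p$, i.e.\ $B_m=-\tfrac1p+(\text{$p$-integral})$. As $1\leq j\leq k\leq n-1$, the exponents $m=n-j$ are all positive, and for odd $p$ the exceptional ones are exactly the $j$ with $j\equiv n\equiv r\pmod{p-1}$. Splitting $X$ into its exceptional and non-exceptional terms then gives $X=-\tfrac1p\,a+U$ with $v_p(U)\geq0$ and $a=\sum_{1\leq j\leq k,\ j\equiv r\,(p-1)}\binom{k}{j}\in\mathbb{Z}$; since $v_p(X)\geq-1$, this decomposition forces $p\mid a$ precisely when $v_p(X)\geq0$ and, when $v_p(X)=-1$, pins down $a\bmod p$, which gives the asserted uniqueness. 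The prime $p=2$, where the exceptional set is governed by parity rather than by divisibility by $p-1$, I would handle separately using $B_m\equiv-\tfrac12\pmod{\mathbb{Z}_{(2)}}$ for even $m$ together with the elementary identity $\sum_{j\equiv\varepsilon\,(2)}\binom{k}{j}=2^{k-1}$.

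The heart of the argument is (ii), namely the congruence, valid for $0\leq r'\leq p-2$ and $N\geq1$,
\begin{equation*}
\sum_{\substack{0\leq j\leq N\\ j\equiv r'\,(p-1)}}\binom{N}{j}\ \equiv\ \sum_{\substack{0\leq i\leq\widetilde{N}\\ i\equiv r'\,(p-1)}}\binom{\widetilde{N}}{i}\pmod p,
\end{equation*}
where $\widetilde{N}\equiv N\pmod{p-1}$ and $1\leq\widetilde{N}\leq p-1$. I would prove this by a roots-of-unity filter carried out inside $\mathbb{F}_p$: the $(p-1)$-th roots of unity in $\mathbb{F}_p$ are exactly the elements of $\mathbb{F}_p^{*}$ and $\tfrac1{p-1}\equiv-1$, so the left-hand side equals $-\sum_{t\in\mathbb{F}_p^{*}}t^{-r'}(1+t)^{N}$ modulo $p$; since $(1+t)^{p-1}\equiv1$ for $t\neq-1$ while $(1+t)^{N}$ and $(1+t)^{\widetilde{N}}$ both vanish at $t=-1$ (as $N,\widetilde{N}\geq1$), one may replace $N$ by $\widetilde{N}$, and then expanding $(1+t)^{\widetilde{N}}$ and using $\sum_{t\in\mathbb{F}_p^{*}}t^{s}\equiv-1$ or $0$ according as $(p-1)\mid s$ or not yields the right-hand side; alternatively this is a classical congruence of Glaisher. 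Taking $N=k$, so $\widetilde{N}=\widetilde{k}$: if $r\geq1$, the only admissible index is $i=r$, so $a\equiv\binom{\widetilde{k}}{r}$; if $r=0$, the index $i=0$ always contributes $\binom{\widetilde{k}}{0}=1$ and, when additionally $p-1\mid k$ (i.e.\ $\widetilde{k}=p-1$), the index $i=p-1$ also contributes $\binom{p-1}{p-1}=1$, whereas on the left the term $j=0$ is not a legitimate value in $X$ and must be removed; bookkeeping of these $\pm1$'s gives $a\equiv0\pmod p$ when $p-1\nmid k$ and $a\equiv1\pmod p$ when $p-1\mid k$, which is (\ref{eq:sum_binom_bernoulli_2}).

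I expect the main obstacle to be this binomial-sum evaluation together with the careful handling of the boundary indices inside it --- whether $j=0$ is a term of $X$, and the coincidence of the residues $0$ and $p-1$ modulo $p-1$ when $\widetilde{k}=p-1$ --- since these edge effects are precisely what separates the three cases of (\ref{eq:sum_binom_bernoulli_2}); the remaining ingredients (the von Staudt--Clausen bookkeeping and the $p=2$ sidebar) are routine.
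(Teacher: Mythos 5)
Your proposal follows essentially the same route as the paper: the von Staudt--Clausen theorem isolates the non-$p$-integral Bernoulli terms and identifies $a$ as $\sum_{1\le j\le k,\ j\equiv r\,(\mathrm{mod}\ p-1)}\binom{k}{j}$, after which the three cases of (\ref{eq:sum_binom_bernoulli_2}) come from evaluating this binomial sum modulo $p$. The only difference is that where the paper cites Glaisher (for $r\ge 1$) and Hermite--Bachmann (for $r=0$, with the boundary index $j=k$ accounting for the $p-1\mid k$ dichotomy), you prove a single unified congruence reducing $k$ to $\widetilde{k}$ via a roots-of-unity filter over $\mathbb{F}_p^{*}$ --- a correct, self-contained substitute whose boundary bookkeeping at $j=0$ and $i=p-1$ reproduces exactly the paper's case split, and whose explicit attention to $B_m=0$ for odd $m\ge 3$ when $p=2$ is if anything more scrupulous than the paper's blanket decomposition.
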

\begin{proof}
The Bernoulli numbers can be written in the form 
$$
B_n = \left\{\begin{array}{cl}
-\frac{1}{p} + U_n,& \mbox{if $p-1\mid n$},\\
U_n, &\mbox{if $p-1\nmid n$}
\end{array}\right.
\quad\mbox{for some}  \ U_n\in\mathbb{Q},  \ v_p(U_n)\geq 0, \quad n=1,2,\dots,
$$
which is an easy consequence of the  von~Staudt--Clausen theorem,
cf.~\cite{Carlitz}.
This implies (\ref{eq:sum_binom_bernoulli}) with $U=\sum_{j=1}^{k}\genfrac(){0pt}{}{k}{j}U_{n-j}$ and
\begin{equation}\label{eq:s_sum}
a=\sum_{\substack{1\leq j\leq k,\\ j\equiv r\, (\mathrm{mod}\ p-1)}}\genfrac(){0pt}{}{k}{j},
\end{equation}
which for $r\geq 1$ is $\equiv \genfrac(){0pt}{}{\widetilde{k}}{r}
\   (\mathrm{mod}\ p)$, which is a result due to Glaisher, cf.~\cite[Eq.~(1)]{Mattarei}.
In the case $r=0$  the congruence (\ref{eq:sum_binom_bernoulli_2}) follows 
from (\ref{eq:s_sum}) by an application of
$$
\sum_{\substack{1\leq j\leq k-1,\\ j\equiv 0\, (\mathrm{mod}\ p-1)}}
\genfrac(){0pt}{}{k}{j}
\equiv 0\ (\mathrm{mod}\ p),
$$ 
 a result of Hermite and Bachmann, cf.~\cite[Corollary~1]{MacMillanSondow}. (Note that here $j\leq k-1$ instead of 
 $\leq k$ as before.)
\end{proof}

\begin{remark}\label{rem:sum_m_2_mod_p} If $p-1\mid k$, then we have
\begin{equation*}
\genfrac(){0pt}{}{\widetilde{k}}{r} =
\genfrac(){0pt}{}{p-1}{r} = \frac{(p-1)\cdots(p-r)}{1\cdots r}
\equiv(-1)^r
\equiv(-1)^n
\ \ (\mathrm{mod}\ p)
\end{equation*}
in (\ref{eq:sum_binom_bernoulli_2}), where for the last congruence
we have  used $n=q(p-1)+r$ with $p-1$ even if $p\geq 3$  (and  $-1\equiv+1$ if $p=2$).
\end{remark}
If $n<p$, then  it follows easily from 
 Proposition~\ref{prop:sum_m_2_mod_p} that
$a\equiv 0\ (\mathrm{mod}\ p)$ in (\ref{eq:sum_binom_bernoulli}), and thus
$v_p\left(\sum_{j=1}^{k}\genfrac(){0pt}{}{k}{j}B_{n-j}\right)\geq 0$ for all $k=1,\ldots,n-1$.
Because 
$v_p\left(\genfrac(){0pt}{}{n}{k}\right)=0$, 
$v_p(n!)=0$,  and $l(n,p)=0$ for $n<p$, this implies (\ref{eq:vp_den_c}) for
$n<p$ and  all  $q_1=n-k$, $q_2=k$, $k=1,\ldots,n-1$.\footnote{For 
even $n\geq 4$ we have
$c(n-k,k)=0$ for $k=1$ and $k=n-1$, 
so that
in this case (\ref{eq:vp_den_c}) holds by the convention $\denom(0)=1$.
 If we want to avoid this convention, we can 
consider $k=2$ instead, since $c(n-2,2)\neq 0$ for $n\geq 3$.}

In the following we assume $n\geq p$. 

It follows from Proposition~\ref{prop:sum_m_2_mod_p} and Remark~\ref{rem:sum_m_2_mod_p} that if $p-1\mid k$, then
$a\not\equiv 0\ (\mathrm{mod}\ p)$ in (\ref{eq:sum_binom_bernoulli}), and
thus $v_p\left(\sum_{j=1}^{k}\genfrac(){0pt}{}{k}{j}B_{n-j}\right)=-1$ 
in this case. Therefore and according to (\ref{eq:m2_expl}),
it remains to find an integer $k$ with $1\leq k\leq n-1$  and $p-1\mid k$, for which
we have
$v_p\left(\genfrac(){0pt}{}{n}{k}\right)=1$ in the case 
$l(n,p)=0$,
or 
$v_p\left(\genfrac(){0pt}{}{n}{k}\right)=0$ in the case 
 $l(n,p)=1$. Such a $k$ then ensures that (\ref{eq:vp_den_c}) holds with 
 $m=2$, $q_1=n-k$, $q_2=k$. 
 The following two lemmas show respectively that 
a suitable such $k$ can indeed be chosen in both cases.

\begin{lemma}\label{lemma:1}
Let  $p\geq 2$ prime and  let $n\geq p$ be an integer with  $p$-adic expansion
$$n=\alpha_rp^r+\alpha_{r-1}p^{r-1}+\ldots+\alpha_0,\quad
r\geq 1, \ \alpha_r\geq 1, \ 0\leq \alpha_i\leq p-1,
$$
where $\alpha_{r-1}<p-1$ (which is certainly the case if $s_p(n)<p$).
Then
\begin{equation*}
v_p\left(\genfrac(){0pt}{}{n}{p^{r-1}(p-1)}\right) 
= 1. 
\end{equation*}
\end{lemma}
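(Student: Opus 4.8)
The plan is to compute the $p$-adic valuation of $\binom{n}{k}$ for $k=p^{r-1}(p-1)$ via Kummer's theorem, which states that $v_p\big(\binom{n}{k}\big)$ equals the number of carries when adding $k$ and $n-k$ in base $p$. First I would write down the base-$p$ digits of $k=p^{r-1}(p-1) = (p-1)p^{r-1}$: it has a single nonzero digit, namely $\alpha'_{r-1}=p-1$, and all other digits zero. I would then determine $n-k$ by performing the subtraction $n-k$ in base $p$ and tracking borrows, using the hypothesis $\alpha_r\geq 1$ to guarantee $n\geq k$ (indeed $n\geq p^r > p^{r-1}(p-1)$), and the hypothesis $\alpha_{r-1}<p-1$ to control exactly one borrow.

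The key computation goes as follows. To form $n-k$, we subtract $p-1$ from the digit in position $r-1$. Since $\alpha_{r-1}\leq p-2 < p-1$, this digit position requires a borrow from position $r$: the position-$(r-1)$ digit of $n-k$ becomes $\alpha_{r-1}+p-(p-1) = \alpha_{r-1}+1$, and position $r$ drops from $\alpha_r$ to $\alpha_r-1$ (valid since $\alpha_r\geq 1$); there is no further borrow. So the base-$p$ representation of $n-k$ has digits $\alpha_0,\dots,\alpha_{r-2}$ in positions $0,\dots,r-2$, then $\alpha_{r-1}+1$ in position $r-1$, then $\alpha_r-1$ in position $r$, and the same higher digits as $n$ (there are none beyond $r$). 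Now I would add $k$ and $n-k$ back together in base $p$ and count carries: positions $0,\dots,r-2$ add $0$ to $\alpha_i$ with no carry; position $r-1$ adds $p-1$ to $\alpha_{r-1}+1$, giving $\alpha_{r-1}+p \geq p$, hence exactly one carry into position $r$; position $r$ then adds $0+ (\alpha_r-1) + 1 = \alpha_r \leq p-1$, no further carry. Thus there is exactly one carry, so $v_p\big(\binom{n}{k}\big)=1$.

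For $p=2$ the hypothesis ``$\alpha_{r-1}<p-1$'' forces $\alpha_{r-1}=0$, and $k=2^{r-1}$; the argument above still goes through verbatim (position $r-1$ becomes $0$, requires a borrow, etc.), so the result is uniform in $p$. I should also remark that the parenthetical claim ``certainly the case if $s_p(n)<p$'' holds because if $\alpha_{r-1}=p-1$ then $s_p(n)\geq \alpha_r+\alpha_{r-1}\geq 1+(p-1)=p$, contradicting $s_p(n)<p$; this is the regime in which Lemma~\ref{lemma:1} will actually be applied (the case $l(n,p)=0$, where $s_p(n)<p^1=p$).

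I do not anticipate a genuine obstacle here; the only thing to be careful about is the borrow-chain bookkeeping in the subtraction $n-k$, making sure no cascade of borrows occurs beyond position $r$ — but since $n$ has no digits above position $r$ and position $r$ itself is $\alpha_r\geq 1$, the single borrow terminates immediately. One could alternatively phrase the whole argument directly through Legendre's formula, computing $v_p\big(\binom{n}{k}\big) = \big(s_p(k)+s_p(n-k)-s_p(n)\big)/(p-1)$ with $s_p(k)=p-1$, $s_p(n-k)=s_p(n)+1$ (from the digit analysis above), giving $\big((p-1)+s_p(n)+1-s_p(n)\big)/(p-1) = p/(p-1)$ — which is wrong, so I must instead get $s_p(n-k)=s_p(n)$: rechecking, $n-k$ loses $1$ at position $r$ and gains $1$ at position $r-1$, so indeed $s_p(n-k)=s_p(n)$, and Legendre gives $\big((p-1)+s_p(n)-s_p(n)\big)/(p-1)=1$. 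Either formulation works; I would present the Kummer/carry version as the cleaner one and cross-check with Legendre.
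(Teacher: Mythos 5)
Your proof is correct and is essentially the paper's argument: the paper computes the same base-$p$ digits of $n-k$ (namely $(\alpha_r-1)p^r+(\alpha_{r-1}+1)p^{r-1}+\alpha_{r-2}p^{r-2}+\cdots+\alpha_0$, using $\alpha_r\geq 1$ and $\alpha_{r-1}<p-1$ exactly as you do) and then applies the Legendre-formula identity $v_p\bigl(\binom{n}{k}\bigr)=\frac{1}{p-1}\bigl(s_p(k)+s_p(n-k)-s_p(n)\bigr)$, which is precisely your cross-check; your Kummer carry-count is just the equivalent reformulation of that identity.
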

\begin{proof}
Let $k=p^{r-1}(p-1)$. Then $n-k=(\alpha_r-1)p^r+(\alpha_{r-1}+1)p^{r-1}
+\alpha_{r-2}p^{r-2}+\ldots+\alpha_0$, and thus
\begin{align*}
v_p&\left(\genfrac(){0pt}{}{n}{k}\right)
=\frac{1}{p-1}\big(s_p(k)+s_p(n-k)-s_p(n)\big)\\
&=\frac{1}{p-1}\big((p-1)+(\alpha_r-1)+(\alpha_{r-1}+1)+\alpha_{r-2}+\ldots+\alpha_0
-(\alpha_r+\ldots+\alpha_0)\big)\\
&=1.
\end{align*}
\end{proof}
\begin{lemma}\label{lemma:2}
Let $p\geq 2$ prime and  let $n\geq 1$ be an integer with  $p$-adic expansion
$$n=\alpha_0+\alpha_1 p+\ldots+\alpha_r p^r,
\quad
0\leq \alpha_i\leq p-1.
$$
If  an integer $k\geq 0$ has a $p$-adic expansion of the special form
\begin{equation}\label{eq:k_beta_leq_alpha}
k=\beta_0+\beta_1 p+\ldots+\beta_r p^r,
\quad
0\leq\beta_i\leq \alpha_i,
\end{equation}
then
\begin{equation*}
v_p\left(\genfrac(){0pt}{}{n}{k}\right) 
= 0.
\end{equation*}
Furthermore, if $s_p(n)=\alpha_0+\ldots+\alpha_r\geq p$, then
there exists an integer  $k$ of the form (\ref{eq:k_beta_leq_alpha})
which satisfies $1\leq k\leq n-1$ and $p-1\mid k$.
\end{lemma}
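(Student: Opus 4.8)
The plan is to prove Lemma~\ref{lemma:2} in two parts, corresponding to its two assertions.

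\emph{First part.} To show $v_p\big(\binom{n}{k}\big)=0$ when $k$ has a $p$-adic expansion dominated digit-by-digit by that of $n$, I would invoke \emph{Kummer's theorem}, which states that $v_p\big(\binom{n}{k}\big)$ equals the number of carries when $k$ and $n-k$ are added in base~$p$. If $k=\sum\beta_ip^i$ with $0\le\beta_i\le\alpha_i$, then $n-k=\sum(\alpha_i-\beta_i)p^i$ is already a legitimate $p$-adic expansion (each digit $\alpha_i-\beta_i$ lies in $[0,p-1]$), and adding $k$ and $n-k$ digitwise gives exactly $\alpha_i$ in position $i$ with no carry. Hence there are zero carries and $v_p\big(\binom{n}{k}\big)=0$. (Equivalently, one could use Legendre's formula as in the proof of Lemma~\ref{lemma:1}: $s_p(k)+s_p(n-k)=\sum\beta_i+\sum(\alpha_i-\beta_i)=\sum\alpha_i=s_p(n)$, so the valuation is $\tfrac1{p-1}(s_p(k)+s_p(n-k)-s_p(n))=0$.) This part is routine.

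\emph{Second part.} Now assume $s_p(n)=\alpha_0+\cdots+\alpha_r\ge p$; I must exhibit $k$ of the dominated form with $1\le k\le n-1$ and $(p-1)\mid k$. The natural idea is to ``harvest'' a total digit-sum of exactly $p-1$ from the digits of $n$: greedily set $\beta_i:=\min\{h_i,\alpha_i\}$ where $h_0:=p-1$ and $h_{i+1}:=h_i-\beta_i$, sweeping $i=0,1,\dots$ until the running remainder reaches $0$; this is precisely the loop in Algorithm~\ref{alg:witness}. Because $s_p(n)\ge p>p-1$, the sweep terminates with $\sum_i\beta_i=p-1$ and $0\le\beta_i\le\alpha_i$ for all $i$. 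Set $k=\sum\beta_ip^i$. Then $s_p(k)=p-1$, so $k\equiv s_p(k)\equiv p-1\equiv 0\pmod{p-1}$ (using $p\equiv 1\pmod{p-1}$), giving $(p-1)\mid k$. Also $k\ge 1$ since $\sum\beta_i=p-1\ge1$, and $k\le n$ by digitwise domination; it remains to rule out $k=n$. If $k=n$ then $\beta_i=\alpha_i$ for all $i$, so $s_p(n)=s_p(k)=p-1$, contradicting $s_p(n)\ge p$. Hence $1\le k\le n-1$, as required.

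\emph{Main obstacle.} The real content is entirely in the second part: making the greedy construction precise and verifying all three constraints simultaneously. The key load-bearing facts are (i) the hypothesis $s_p(n)\ge p$ strictly exceeds $p-1$, which both guarantees the sweep can collect the full $p-1$ and forces $k\ne n$; and (ii) the elementary congruence $N\equiv s_p(N)\pmod{p-1}$, which converts the digit-sum condition $s_p(k)=p-1$ into the divisibility $(p-1)\mid k$. Everything else — Kummer's theorem or Legendre's formula for the first part, termination of the finite loop — is standard. I would present the greedy $\beta_i$ explicitly as above so that the resulting $k$ matches the one used in Algorithm~\ref{alg:witness}, keeping the proof aligned with the subsequent application in Section~\ref{SubSec:l_leq_1}.
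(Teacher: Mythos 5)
Your proof is correct and follows essentially the same route as the paper: Legendre's formula (equivalently Kummer's carry count) for the first assertion, and a choice of digits $\beta_i\le\alpha_i$ summing to exactly $p-1$, combined with $k\equiv s_p(k)\pmod{p-1}$, for the second. Your write-up is if anything slightly more careful than the paper's, since you explicitly rule out $k=n$.
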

\begin{proof}From the given conditions it follows that
$n-k$ has the $p$-adic expansion
 $n-k=\sum_{i=0}^{r}(\alpha_i-\beta_i)p^i$, $1\leq \alpha_i-\beta_i\leq p-1$. Thus, $s_p(n-k)=s_p(n)-s_p(k)$ and 
$v_p\left(\genfrac(){0pt}{}{n}{k}\right) =\frac{1}{p-1}\big(
s_p(k)-s_p(n-k)+s_p(n)\big)=0$.

Now assume $s_p(n)=\alpha_0+\ldots+\alpha_r\geq p$.
Choose integers $\beta_0,\dots,\beta_r$
such that $0\leq\beta_i\leq \alpha_i$ and
$\beta_0+\ldots+\beta_r=p-1$ and define $k$ according to (\ref{eq:k_beta_leq_alpha}). Note that this is possible because
$p-1 <\alpha_0+\ldots+\alpha_r$. Then $1\leq k\leq n-1$ and
$k=\beta_0+\beta_1 p+\ldots+\beta_r p^r\equiv \beta_0+\ldots+\beta_r
\equiv 0 \ (\mathrm{mod} \ p-1)$.
\end{proof}
\subsection{\label{SubSec:l_gt_1} The case 
$l(n,p)\geq 1$}
Assuming $l(n,p)\geq 1$, i.e., $s_p(n)\geq p$, we construct integers $q_1,\dots,q_m\geq 1$ 
depending on $n$, $p$, and $l=1,\dots, l(n,q)$, which satisfy 
\begin{equation}\label{eq:vp_den_c_l}
v_p(\denom(c(q_1,\dots,c_m))) = v_p(n!)+l.
\end{equation}
This is more general than strictly  necessary, since for 
the proof of Theorem~\ref{Thm:MainTheorem}
it would be sufficient to consider only
 the case $l=l(n,p)\geq 2$. Notice that
the case $l(n,p)=1$ has been dealt with previously.

We set
$$m=p^l \ \ \mbox{or} \ \ m=p^l+1.$$
Consider the $p$-adic expansion
\begin{equation}\label{eq:rep_n_sum_powers}
n = \alpha_0+\alpha_1 p+\ldots+\alpha_r p^r
=\underbrace{p^0+\ldots+p^0}_{\alpha_0}
+\underbrace{p^1+\ldots+p^1}_{\alpha_1}+\ldots
+\underbrace{p^r+\ldots+p^r}_{\alpha_r}.
\end{equation}
From this collection of $\alpha_0+\dots+\alpha_r=s_p(n)$
 powers of $p$ select $m-1$ powers 
$p^{e_2},\dots,p^{e_m}$ 
 and define with them
$$q_2 = p^{e_2},\ \dots,\ q_m=p^{e_m}.$$
Here, a specific
exponent $e$ occurs among the exponents $e_2,\dots,e_m$
at most $\alpha_e$ times. The remaining powers are 
collected in $q_1$ such that
%
$$q_1 = n-\sum_{i=2}^mq_i,$$ 
which ensures that  the  $q_i$ satisfy
$$\sum_{i=1}^{m}q_i=n\ \ \ \mbox{and}\ \ \ \sum_{i=1}^{m}s_p(q_i) = s_p(n)=\alpha_0+\dots+\alpha_r.$$
For this construction to be feasible, we 
have to assume that $q_1>0$.
For $m=p^l$ this is automatically the case, but 
for $m=p^l+1$ we have $q_1=0$ if and only if $s_p(n)=m-1=p^l$, which for $p\neq 2$ 
can only be the case if $n$ is odd, because for $p\neq 2$ and $n$ even we have $s_p(n)$ even but $p^l$ odd. 

If $q_1>0$ can be achieved, then even
$q_1\geq p$
 is possible for suitable chosen $q_i$. Indeed, because
$s_p(n)\geq p>\alpha_0$ at least one of the powers in the collection
of powers in 
(\ref{eq:rep_n_sum_powers}) has exponent $\geq 1$. Thus, the above
construction can be carried out in such a way that this power $\geq p$ 
ends up as one of the remaining powers whose sum is $q_1$.

The following proposition shows that for each $n\geq 1$, prime $p\geq 2$,
and $l\geq 1$,
in at least one of the cases\footnote{More specifically, case
$m=p^l$ for $p=2$ or $p\neq 2$ and $n$ odd, and 
case $m=p^l+1$ for $p\neq 2$ and $n$ even.
} $m=p^l$ or $m=p^l+1$ the above construction
 leads to integers $q_1,\dots,q_m$ which satisfy
$$\widetilde{c}(q_1,\dots,q_m) = \frac{a}{p^l}
+U,\quad a\in\mathbb{Z}, \ a\not\equiv 0\ (\mathrm{mod}\ p),\quad U\in\mathbb{Q},\ v_p(U)>-l,$$
where
$$c(q_1,\dots,q_m)=\frac{(-1)^n}{q_1!\cdots q_m!}\widetilde{c}(q_1,\dots,q_m).
$$
Thus,  
$v_p(\denom(\widetilde{c}(q_1,\dots,q_m))=l$
for these $q_i$, which  together with 
\begin{equation*}
v_p(q_1!\cdots q_m!) =
\frac{1}{p-1}\left(\sum_{i=1}^{m}q_i-\sum_{i=1}^{m}s_p(q_i)\right)
 = \frac{1}{p-1}\big(n-s_p(n)\big)
=v_p(n!)
\end{equation*}
implies (\ref{eq:vp_den_c_l}).
\begin{proposition}\label{prop:main_prop}
Let $n\geq 1$ and $p\geq 2$ prime.
Suppose 
$1\leq l\leq l(n,p)$ 
and let $m=p^l$ or $m=p^l+1$.
If $m=q^l+1$, we additionally assume that 
$s_p(n)\neq p^l$ (which is automatically the case if $n$ is even).
Depending on $n$ and  $p$ 
let $q_1,\dots,q_m$ with $q_1\geq p$ be defined as described above.
Furthermore, in the special case $p=2$, $l=1$, $m=p^l=2$, and $n$ odd, we assume $q_1=n-1$, $q_2=1$.
 Then 
there exists an integer $a$ (unique modulo $p$)  and a rational number $U,\ v_p(U)>-l$, such that
 $\widetilde{c}(q_1,\dots,q_m)=(-1)^nq_1!\cdots q_m!c(q_1,\dots,q_m)$ can 
be written as
\begin{equation}\label{eq:cauv}
\widetilde{c}(q_1,\dots,q_m) = \frac{a}{p^l}
+U.
\end{equation}
If $m=p^l$, then $a$ satisfies\footnote{If $p\neq 2$, then $m=p^l$ is odd. Thus, if $p\neq 2$ and $n$ is even, then not only $a\equiv 0\ (\mathrm{mod}\ p)$ but actually $c(q_1,\dots,q_m)=0$ by Remark~\ref{rem:1}.}
\begin{equation}\label{eq:cauv0}
a \equiv \left\{ 
\begin{array}{rl}
1 \ (\mathrm{mod}\ p), & \mbox{if $p=2$},\\
0 \ (\mathrm{mod}\ p), & \mbox{if $p\neq 2$ and $n$ even},\\
2\left(\frac{p-1}{2}\right)^{n}
\not\equiv 0
\ (\mathrm{mod}\ p),
&\mbox{if $p\neq 2$ and $n$ odd}.
\end{array}\right.
\end{equation}
If $m=p^l+1$, then  $a$ satisfies\footnote{If $p=2$ and $l\geq 1$, then $m=p^l+1$ is odd. Thus, similarly as before, if $p=2$, $l\geq 1$ and $n$ is even, then actually $c(q_1,\dots,q_m)=0$.}
\begin{equation}\label{eq:cauv1}
a \equiv \left\{ 
\begin{array}{rl}
1 \ (\mathrm{mod}\ p), & \mbox{if $p=2$, $l=1$, $n$ odd},\\
0 \ (\mathrm{mod}\ p), & \mbox{if $p=2$, $l\geq 2$ or $p=2$, $l=1$, $n$ even},\\
-\left(\frac{p-1}{2}\right)^{n-1}
\not\equiv 0
\ (\mathrm{mod}\ p),
&\mbox{if $p\neq 2$}.
\end{array}\right.
\end{equation}
\end{proposition}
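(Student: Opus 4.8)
The statement to prove is Proposition~\ref{prop:main_prop}, which computes the $p$-adic valuation of $\widetilde{c}(q_1,\dots,q_m)$ for the specifically constructed $q_i$ (each $q_i$ for $i\geq 2$ a power of $p$, $q_1\geq p$, and $m=p^l$ or $p^l+1$). The natural starting point is Goldberg's formula~\eqref{eq:coeff_expl}. The plan is to substitute $q_i=p^{e_i}$ for $i\geq 2$ and analyze the Stirling numbers $S(q_i,j_i)=S(p^{e_i},j_i)$ modulo powers of $p$. The key classical input here should be the congruence properties of $S(p^e,j)$ (results of the Glaisher/Carlitz type): modulo $p$, $S(p^e,j)$ is highly constrained — essentially $S(p^e,j)\equiv 0$ unless $j$ lies in a short list of residues, and the surviving terms are controlled. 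Combined with the factor $\tfrac{j_1!\cdots j_m!}{j_1+\dots+j_m-k}$, whose denominator contributes a power of $p$ exactly when $p\mid (j_1+\dots+j_m-k)$, one sees that the $p^{-l}$ contribution to $\widetilde{c}$ comes from a controlled sub-sum, and everything else has valuation $>-l$. So the first main step is: reduce the multiple sum~\eqref{eq:coeff_expl} modulo $p^{1-l}$ to a finite, explicit sub-sum indexed by the ``extremal'' choices of the $j_i$.

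The second step is to identify the surviving sub-sum and show its numerator $a$ is as claimed in~\eqref{eq:cauv0}--\eqref{eq:cauv1}. Here the binomial coefficient $\genfrac(){0pt}{}{\widetilde{m}}{k}$ and the sign $(-1)^{j_1+\dots+j_m-k}$ enter; one expects the inner $k$-sum to telescope or to reduce via the binomial theorem (something like $\sum_k (-1)^k\binom{\widetilde{m}}{k}(\text{linear in }k)^{-1}$, which is a classical finite sum evaluating to a reciprocal of a product — a Nörlund-type identity). The distinction between $m=p^l$ and $m=p^l+1$ changes $\widetilde{m}=\lfloor (m-1)/2\rfloor$ from $(p^l-1)/2$ to $p^l/2$ (for odd $p$) or otherwise, which is exactly why the parity of $p$ and of $n$ forces the case split: when $p\neq 2$ one of the two choices of $m$ makes $\widetilde m$ behave well, and the other is killed by the even-$n$/odd-$m$ vanishing from Remark~\ref{rem:1}. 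The explicit constants $2\big(\tfrac{p-1}{2}\big)^n$ and $-\big(\tfrac{p-1}{2}\big)^{n-1}$ should drop out of evaluating the surviving sum, using that $q_1\equiv q_1\pmod p$ with $q_1\geq p$ contributes the ``$-1$'' slot and the powers-of-$p$ slots each contribute a factor related to $S(p^e,p)\equiv$ (something) $\equiv \tfrac{p-1}{2}$-ish modulo $p$ via Fermat.

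The third step handles the special cases separately: the base case $p=2$, $l=1$, $m=2$, $q_1=n-1$, $q_2=1$ (here $\widetilde c(n-1,1)$ is essentially a single Bernoulli-number expression, so one falls back on Proposition~\ref{prop:sum_m_2_mod_p} / von~Staudt--Clausen rather than the general machinery), and the bookkeeping that $v_p(U)>-l$ rather than merely $\geq -l$ — this requires showing that \emph{every} non-extremal term genuinely gains at least one power of $p$, which is where the Stirling congruences must be used with a little room to spare. I would also need to double-check the uniqueness of $a$ modulo $p$, which is immediate once the decomposition~\eqref{eq:cauv} is established with $v_p(U)>-l$.

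**Main obstacle.** The hard part will be the second step: pinning down the exact residue of the surviving sub-sum modulo $p$, i.e.\ proving the precise constants in~\eqref{eq:cauv0}--\eqref{eq:cauv1} rather than merely ``$a\not\equiv 0$''. This needs a clean evaluation of the inner alternating sum over $k$ weighted by $\binom{\widetilde m}{k}/(j_1+\dots+j_m-k)$ together with sharp mod-$p$ values of $S(p^{e},j)$ for the relevant small $j$ (not just $j=p$ but possibly $j\in\{1,p,\dots\}$), and careful tracking of how the $q_1$-slot's Stirling factor $S(q_1,j_1)$ interacts — since $q_1$ is only known to satisfy $q_1\geq p$ and $q_1+\sum_{i\geq2}q_i=n$, one must extract its contribution purely from $q_1\bmod(p-1)$ and $q_1\bmod p$ via Fermat's little theorem, which is exactly the point where the global exponent $n$ (through $(\tfrac{p-1}{2})^n$) reappears. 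Managing the three-way parity interaction ($p=2$ vs.\ $p$ odd; $n$ even vs.\ odd; $m=p^l$ vs.\ $p^l+1$) without sign errors is the delicate bookkeeping, and the footnoted ``$c(q_1,\dots,q_m)=0$'' shortcuts from Remark~\ref{rem:1} will be essential to avoid proving a congruence for a quantity that is actually identically zero.
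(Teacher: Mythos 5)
Your plan follows the same route as the paper's actual proof: start from Goldberg's formula \eqref{eq:coeff_expl}, isolate the terms whose denominator $j_1+\cdots+j_m-k$ is a multiple of $p^l$, use congruences for $S(p^e,j)$ to kill most of them, and evaluate what survives via Lucas-type reductions and Fermat's little theorem. But as written it is an outline, and the two steps it defers are precisely the ones carrying the mathematical content. The first gap: the claim that ``everything else has valuation $>-l$'' is asserted, not proved, and your phrasing (``the denominator contributes a power of $p$ exactly when $p\mid(j_1+\cdots+j_m-k)$'') suggests you are only accounting for a single power of $p$ in the denominator. In fact for the selected terms the denominator equals $p^l\lfloor(j_1+\cdots+j_m)/p^l\rfloor$, and the floor can itself be divisible by further powers of $p$; the reason each term still has valuation $\geq -l$ is that the factorials $j_1!\cdots j_m!$ in the numerator compensate. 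This is exactly Lemma~\ref{lemma:vp_fac_sum} of the paper --- the inequality $v_p(j_1!\cdots j_m!)\geq v_p\big(\lfloor(j_1+\cdots+j_m)/p^l\rfloor\big)$ together with a complete characterization of the equality cases --- and it is the engine of the whole argument: the equality characterization, fed into the Stirling congruences \eqref{eq:stirling_power_mod}, is what pins the surviving terms down to $j_2=\cdots=j_m=1$, $j_1\leq p-1$ (plus a handful of exceptional tuples when $p=2$, $l=1$). Without this lemma or an equivalent, your step one does not go through.

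The second gap concerns the evaluation. Your anticipated ``N\"orlund-type'' alternating sum $\sum_k(-1)^k\binom{\widetilde m}{k}(\cdots-k)^{-1}$ misreads the structure: for each surviving tuple $(j_1,\dots,j_m)$ the index $k$ is \emph{pinned} to $(j_1+\cdots+j_m)\bmod p^l$ (since $0\le k\le\widetilde m<p^l$), so there is no free $k$-sum to telescope. What actually must be evaluated is $\sum_{j=1}^{p-1}\binom{\widetilde m}{k(j)}\,j!\,S(q_1,j)$ modulo $p$, which the paper handles by Lucas's theorem ($\binom{\widetilde m}{k(j)}\equiv\binom{(p-1)/2}{k(j)}\ (\mathrm{mod}\ p)$), the explicit formula \eqref{eq:stirling} for $S(q_1,j)$, and the Vandermonde-type identity (5.24) of \cite{GKP}; Fermat's little theorem then converts the resulting powers of $(p\pm1)/2$ with exponent $q_1$ into the stated powers with exponent $n$. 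None of this computation, nor the $p=2$ bookkeeping (where extra tuples such as $(j_1,j_2)=(3,1)$ survive, and the parity of $q_1$ --- hence the special assumption $q_1=n-1$, $q_2=1$ --- becomes essential via \eqref{eq:Sq3}), is present in the proposal, so the explicit constants in \eqref{eq:cauv0}--\eqref{eq:cauv1} remain unverified. The plan points in the right direction but does not yet constitute a proof.
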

\begin{proof}
By separating those terms in (\ref{eq:coeff_expl}) whose
denominators $j_1+\dots+j_m-k$ 
are multiples of $p^l$ from the others we obtain
\begin{equation}\label{eq:cAU1}
\widetilde{c}(q_1,\dots,q_m) = \frac{1}{p^l}A+U_0
\end{equation}
with
\begin{equation}\label{eq:cAU2}
A
=
\!\!\!\sum_{\substack{1\leq j_1\leq q_1,\\ \dots,\\1\leq j_m\leq q_m}}
\!\!\!\!\!
(-1)^{p^l\left\lfloor\frac{j_1+\ldots+j_m}{p^l}\right\rfloor}
\genfrac(){0pt}{}{\widetilde{m}}{k(j_1,\dots,j_m)}\frac{j_1!\cdots j_m!}{\left\lfloor\frac{j_1+\ldots+j_m}{p^l}\right\rfloor}
S(q_1,j_1)\cdots S(q_m,j_m)
\end{equation}
and $U_0\in\mathbb{Q}$, $v_p(U_0)>-l$.
Here we have used that  $j_1+\ldots+j_m-k$ is a multiple of $p^l$ 
if and only if 
$$k =k(j_1,\dots,j_m) = j_1+\ldots+j_m\ \mathrm{mod}\ p^l
=j_1+\ldots j_m -p^l\left\lfloor\frac{j_1+\ldots+j_m}{p^l}\right\rfloor.
 $$
From  Lemma~\ref{lemma:vp_fac_sum} and Remark~\ref{rem:vp_fac_sum} below it follows that
$v_p(j_1!\cdots j_m!)\geq v_p\left(\left\lfloor\frac{j_1+\ldots+j_m}{p^l}\right\rfloor\right)$,
where equality can only hold
if 
\begin{enumerate}
\item $j_i\leq p-1$, $i=1,\dots, m$, or 
\item $p\neq 2$, $l=1$, and there exists at least one index $i$, $2\leq i \leq m$ such that 
$j_i\in\{2,\dots,p-1,p+1,\dots,2p-1\}$, or 
\item $p=2$, $l=1$, $m=p^l=2$, and $(j_1,j_2)\in\{(1,3),(3,1)\}$, or
\item $p=2$, $l=1$, $m=p^l+1=3$, and $(j_1,j_2,j_3)\in\{(1,1,2), (1,2,1),(2,1,1)$, $(1,1,3), (1,3,1), (3,1,1)\}$.
\end{enumerate}
In the second case we have 
$S(q_1,j_1)\cdots S(q_m,j_m) =S(q_1,j_1)S(p^{e_2},j_2)\cdots S(p^{e_m},j_m)$
$\equiv 0 \ (\mathrm{mod}\ p)$,
which follows
from the fact that the Stirling numbers of the second kind satisfy
\begin{equation}\label{eq:stirling_power_mod}
S(p^e, j) \equiv \left\{\begin{array}{cl}
0 \ (\mathrm{mod}\ p), &\mbox{if $j\neq p^f$, $f=0,\dots,e$}, \\
1 \ (\mathrm{mod}\ p), &\mbox{if $j= p^f$, $f=0,\dots,e$}, 
\end{array}\right.\quad e\geq 0,\ j=1,\dots,p^e,
\end{equation}
see \cite[Theorem~4.2]{Howard}. In the first case this property 
of the Stirling numbers 
implies that
$S(q_1,j_1)\cdots S(q_m,j_m)=S(q_1,j_1)S(p^{e_2},j_2)\cdots S(p^{e_m},j_m)
\not\equiv 0 \ (\mathrm{mod}\ p)$ is possible only for
$j_2=\ldots=j_m=1$. 
We have thus established that
\begin{equation}\label{eq:vp_greater_0}
v_p\left(\frac{j_1!\cdots j_m!}{\left\lfloor\frac{j_1+\ldots+j_m}{p^l}\right\rfloor}
S(q_1,j_1)\cdots S(q_m,j_m)\right)\geq 0,
\end{equation}
where for $p\neq 2$ or $p=2$,  $l\geq 2$ equality is possible only for $j_1\leq p-1$ and $j_2=\ldots=j_m=1$.
In the latter case a simple calculation yields
$$
k(j_1,1,\dots, 1) = \left\{\begin{array}{cl}
j_1-1, &\mbox{if $m=p^l$},\\
j_1, &\mbox{if $m=p+1$},
\end{array}\right.
$$
and
$$
\left\lfloor\frac{j_1+\ldots+j_m}{p^l}\right\rfloor=1,$$
and we have $S(q_2,j_2)\cdots S(q_m,j_m)=1$.
Collecting in  (\ref{eq:cAU1}), (\ref{eq:cAU2}) the terms 
of $A$ with $j_1\leq \min(p-1, q_1)=p-1$,\footnote{$\min(p-1, q_1)=p-1$
because $q_1\geq p$ by assumption.} $j_2=\ldots=j_m=1$, and combining the other terms (which have $p$-adic valuation $>0$) divided by $p^l$ with $U_0$,
we  obtain (\ref{eq:cauv}) with
\begin{equation}\label{eq:a}
a = (-1)^p\sum_{j=1}^{p-1}\genfrac(){0pt}{}{\widetilde{m}}{k(j)}j!\,S(q_1,j),\quad
k(j)=\left\{\begin{array}{cl}
j-1,& \mbox{if $m=p^l$},\\
j,& \mbox{if $m=p^l+1$} 
\end{array}\right.
\end{equation}
and suitable $U\in\mathbb{Q}$ with $v_p(U)>-l$.

{\em Case $p\neq 2$.}
Because
$$
\widetilde{m} = \frac{p^l-1}{2}=\frac{p-1}{2}\big(1+p+\ldots+p^{l-1}\big)
$$
and $k(j)\leq p-1$,
Lucas's theorem, which states that
$$
\genfrac(){0pt}{}{\alpha_0+\alpha_1p+\ldots+\alpha_rp^r}{\beta_0+\beta_1p+\ldots+\beta_rp^r}\equiv
\genfrac(){0pt}{}{\alpha_0}{\beta_0}
\genfrac(){0pt}{}{\alpha_1}{\beta_1}\cdots
\genfrac(){0pt}{}{\alpha_r}{\beta_r}
\ \ (\mathrm{mod}\ p),\quad 0\leq\alpha_i,\beta_i\leq p-1
$$
(cf., e.g., \cite{Fine}),
implies
$$
\genfrac(){0pt}{}{\widetilde{m}}{k(j)}
\equiv\genfrac(){0pt}{}{\frac{p-1}{2}}{k(j)}\quad (\mathrm{mod}\ p).
$$
Substituting (\ref{eq:stirling}) in (\ref{eq:a}) we thus obtain
\begin{equation}\label{eq:a_as_sum}
a \equiv-\sum_{i}i^{q_1}(-1)^{i}\sum_{j}
\genfrac(){0pt}{}{\frac{p-1}{2}}{k(j)}
\genfrac(){0pt}{}{j}{i}(-1)^j\quad (\mathrm{mod}\ p).
\end{equation}
Using 
$$
\sum_{j=-\beta}^{\alpha}\genfrac(){0pt}{}{\alpha}{\beta+j}
\genfrac(){0pt}{}{\gamma+j}{\delta}(-1)^j
=(-1)^{\alpha+\beta}\genfrac(){0pt}{}{\gamma-\beta}{\delta-\alpha},\quad
\alpha, \beta, \gamma, \delta\in\mathbb{Z}, \ \alpha\geq 0, \ \gamma\geq\beta
$$
(cf., e.g., \cite[Eq. (5.24)]{GKP}), we obtain for the case $m=p^l$, 
\begin{align*}
\sum_{j=1}^{\frac{p+1}{2}}
\genfrac(){0pt}{}{\frac{p-1}{2}}{j-1}
\genfrac(){0pt}{}{j}{i}(-1)^j
&
=(-1)^{\frac{p+1}{2}}\genfrac(){0pt}{}{1}{i-\frac{p-1}{2}}\\
&=\left\{\begin{array}{cl}
(-1)^{\frac{p+1}{2}}, & \mbox{if $i=\frac{p-1}{2}$ or $i=\frac{p+1}{2}$},\\
0, & \mbox{otherwise},
\end{array}\right.
\end{align*}
and thus by substituting the sum over $j$ in (\ref{eq:a_as_sum})
\begin{align*}
a&\equiv \left(\frac{p-1}{2}\right)^{q_1}-
\left(\frac{p+1}{2}\right)^{q_1}
\equiv
 \left(\frac{p-1}{2}\right)^{n}-
\left(\frac{p+1}{2}\right)^{n}\\
&\equiv
\left(\frac{p-1}{2}\right)^{n}(1-(-1)^{n}) 
=\left\{\begin{array}{rl}
2\left(\frac{p-1}{2}\right)^{n}\ (\mathrm{mod}\ p), &\mbox{if $n$ even},\\
0\qquad\ (\mathrm{mod}\ p), &\mbox{if $n$ odd}.
\end{array}\right.
\end{align*}
Here we used $x^{q_1}\equiv x^n\ (\mathrm{mod}\ p)$ for $x\not\equiv 0 \ (\mathrm{mod}\ p)$, which follows
from $x^{n+1}
=x^{q_1}x^{p^{e_2}}\cdots x^{p^{e_m}}x
\equiv
x^{q_1}x^m=x^{q_1}x^{p^l}\equiv x^{q_1+1}\ (\mathrm{mod}\ p)$, which is
a consequence of Fermat's little theorem.
Similarly, for the case $m=p^l+1$,
\begin{equation*}
\sum_{j=1}^{\frac{p-1}{2}}
\genfrac(){0pt}{}{\frac{p-1}{2}}{j}
\genfrac(){0pt}{}{j}{i}(-1)^j
=(-1)^{\frac{p-1}{2}}\genfrac(){0pt}{}{0}{i-\frac{p-1}{2}} 
=\left\{\begin{array}{cl}
(-1)^{\frac{p-1}{2}}, & \mbox{if $i=\frac{p-1}{2}$},\\
0, & \mbox{otherwise},
\end{array}\right.
\end{equation*}
and thus
$$
a\equiv -\left(\frac{p-1}{2}\right)^{q_1}
\equiv -\left(\frac{p-1}{2}\right)^{n-1}\quad(\mathrm{mod}\ p).$$

{\em Case $p=2$, $l\geq 2$.} In this case the sum (\ref{eq:a}) reduces to 
the single term with $j=1$ where $S(q_1,j)=1$, and we obtain 
$$
a=\genfrac(){0pt}{}{\widetilde{m}}{k(j)}=
\genfrac(){0pt}{}{2^{l-1}-1}{0} = 1 \qquad \mbox{for $m=p^l$}
$$
and 
$$
a=\genfrac(){0pt}{}{\widetilde{m}}{k(j)}=
\genfrac(){0pt}{}{2^{l-1}}{1} = 2^{l-1}\equiv
0\quad (\mathrm{mod}\ 2)
\qquad\mbox{for $m=p^l+1$, $l\geq 2$}.
$$

{\em Case $p=2$, $l=1$, $m=p^l=2$.} In this case, equality
in (\ref{eq:vp_greater_0}) can possibly hold only for $(j_1,j_2)\in\{(1,1),(3,1)\}$, see the cases 1 and 3 above. Here we have already excluded  $(j_1, j_2)=(1,3)$, because $S(q_2,3)\equiv 0\ (\mathrm{mod}\ 2)$ if
$q_2=2^{e_2}\geq 3$ by (\ref{eq:stirling_power_mod}) (or, alternatively, by 
(\ref{eq:Sq3}) below).
We define $a$ in (\ref{eq:cauv}) as the sum of the terms in (\ref{eq:cAU2}) corresponding to 
$(j_1,j_2)\in\{(1,1),(3,1)\}$  and define $U\in\mathbb{Q}$ accordingly.
Noting $\widetilde{m}=1$, $k(3,1)= 0$, 
$\left\lfloor\frac{3+1}{2^1}\right\rfloor=2$, we
obtain $a\equiv S(q_1,1)+\frac{1}{2}3!S(q_1,3)
\equiv 1+q_1\equiv 1\ (\mathrm{mod}\ 2)$, if $q_1\geq 3$, and 
$a\equiv S(q_1,1)\equiv 1\ (\mathrm{mod}\ 2)$ otherwise. Here,
in the former case we have used 
\begin{equation}\label{eq:Sq3}
S(q,3)=\frac{3^{q-1}-1}{2}+1-2^{q-1}
=1+3+\ldots+3^{q-2}+1-2^{q-1}\equiv q \ (\mathrm{mod}\ 2)
\end{equation}
and the fact that $q_1$ is even, which for $n$ odd follows
from the assumption $q_1=n-1$, and for $n$ even from
$q_1=n-q_2=n-2^{e_2}$ where $e_2\geq 1$ if $n$ even.

{\em Case $p=2$, $l=1$, $m=p^l+1=3$.}
In this case, besides $q_2$ and $q_3$, which are powers of $2$ by construction, also $q_1$ is a power of $2$. In fact, $q_1,q_2,q_3$ are  pairwise distinct powers of $2$. It follows that $n=q_1+q_2+q_3$
is odd if and only if $q_i=1$ for a single $i\in\{1,2,3\}$.
In $v_p(j_1!\cdots j_m!)\geq v_p\left(\left\lfloor\frac{j_1+\ldots+j_m}{p^l}\right\rfloor\right)$ equality
is  possible only for 
$(j_1,j_2,j_3)\in\{(1,1,1), (1,1,2), (1,2,1),(2,1,1), (1,1,3), (1,3,1), (3,1,1)\}$. Because $S(q_i,1)=S(q_i,2)\equiv 1 \ (\mathrm{mod}\ 2)$
and $S(q_i,3)\equiv 0  \ (\mathrm{mod}\ 2)$ 
by (\ref{eq:stirling_power_mod}), it follows that  
in (\ref{eq:vp_greater_0}) equality holds precisely
if  $(j_1,j_2,j_3)\in\{(1,1,1)$, $(1,1,2), (1,2,1),(2,1,1)\}$.
For these $(j_1,j_2,j_3)$ we have $k=k(j_1,j_2,j_3)=1$ for $(j_1,j_2,j_3)=(1,1,1)$ and $k=0$ otherwise, and thus
$\genfrac(){0pt}{}{\widetilde{m}}{k}=
\genfrac(){0pt}{}{1}{k} = 1,$
so that
the corresponding terms in (\ref{eq:cAU2}) are all 
$\equiv 1\ (\mathrm{mod}\ 2)$. Such a term appears in (\ref{eq:cAU2})
if and only if $j_i\leq q_i$, $i=1,2,3$. Thus, the number of such terms
is $3$ if $q_i=1$ for one $i\in\{1,2,3\}$, i.e., if $n$ is odd, and $4$
otherwise. If we define $a$ as the sum of these $3$ respectively $4$ terms 
and define $U\in\mathbb{Q}$ accordingly, we obtain (\ref{eq:cauv}) with
$a\equiv 1 \ (\mathrm{mod}\ 2)$ if $n$ is odd and
$a\equiv 0 \ (\mathrm{mod}\ 2)$ if $n$ is even.
\end{proof}

\begin{lemma}\label{lemma:vp_fac_sum}  Let $m=p^l$ or $m=p^l+1$ 
for $p\geq 2$ prime and  $l\geq 1$, and $j_1,\dots, j_m\geq 1$.
Then
\begin{equation*}
v_p(j_1!\cdots j_m!)\geq v_p\left(\left\lfloor\frac{j_1+\ldots+j_m}{p^l}\right\rfloor\right),
\end{equation*}
where equality holds precisely if $$j_i\leq p-1, \ \ i=1,\dots,m,$$
or  
\begin{equation}\label{eq:case_equality_l1_mpl}
l=1,\ \ m=p, \ \ \mbox{and} \ \ \
j_k=2p-1, \  j_i=p-1,  \ i\neq k \ \ \mbox{for some $k=1,\dots,m$},
\end{equation}
 or  
\begin{align}\label{eq:case_euqality_l1}
&l=1,\  m=p+1, \
p\leq j_k\leq 2p-1,  \ 1 \leq j_i\leq p-1, \ i\neq k
\ \ \mbox{for some $k=1,\dots,m$}, \nonumber\\
&\mbox{and} \ \ p^2\leq j_1+\ldots+j_m\leq p^2+p-1.
\end{align}
\end{lemma}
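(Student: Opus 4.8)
\emph{Proof plan.}
The plan is to reduce everything to Legendre's formula together with the superadditivity $\sum_i\lfloor x_i\rfloor\le\lfloor\sum_i x_i\rfloor$ of the floor, and then to obtain the equality cases by recording which of the inequalities used are tight. Write $N=j_1+\dots+j_m$, so $N\ge m\ge p^l$; hence $\lfloor N/p^l\rfloor\ge1$ and $t:=v_p(\lfloor N/p^l\rfloor)$ is a well-defined nonnegative integer with $t\le\lfloor\log_p\lfloor N/p^l\rfloor\rfloor=\lfloor\log_p N\rfloor-l$. Legendre's formula gives $v_p(j_1!\cdots j_m!)=\sum_{e\ge1}\sum_{i=1}^m\lfloor j_i/p^e\rfloor$ and $v_p(N!)=\sum_{e\ge1}\lfloor N/p^e\rfloor$, and for each $e$ the ``deficit'' $\lfloor N/p^e\rfloor-\sum_i\lfloor j_i/p^e\rfloor=\lfloor(\sum_i(j_i\bmod p^e))/p^e\rfloor$ lies between $0$ and $m-1$. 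Combining this with $\sum_i\lfloor j_i/p^e\rfloor\ge0$ and summing over $e$ yields the basic estimate
\[
v_p(j_1!\cdots j_m!)\ \ge\ \sum_{e\ge1}\max\bigl(0,\ \lfloor N/p^e\rfloor-(m-1)\bigr).
\]

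If $m=p^l$ this already settles the inequality: the $e$-th term on the right is $\ge1$ precisely when $\lfloor N/p^e\rfloor\ge p^l$, i.e.\ when $e\le\lfloor\log_p N\rfloor-l$, so the sum is $\ge\lfloor\log_p N\rfloor-l\ge t$. If $m=p^l+1$, the basic estimate gives the inequality except when $N$ lies at or just above a power of $p$, where it is short by exactly $1$; in that range I would single out an index $M$ with $j_M=\max_i j_i$, apply the case $m=p^l$ to the remaining $p^l$ parts (which sum to $N-j_M$) to get $v_p(\prod_{i\ne M}j_i!)\ge v_p(\lfloor(N-j_M)/p^l\rfloor)$, and recover the missing power of $p$ from $v_p(j_M!)$, using that $j_M\ge\lceil N/(p^l+1)\rceil$ is then forced to be large. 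This settles the inequality in all cases.

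For equality, the case $t=0$ is immediate: $v_p(j_1!\cdots j_m!)=0$ forces every $j_i\le p-1$ (first case), and conversely all $j_i\le p-1$ gives $N\le m(p-1)<p^{l+1}$, so $t=0$. Assume now $t\ge1$. If $m=p^l$, equality means the basic estimate is tight: $t=\lfloor\log_p N\rfloor-l$, every retained summand equals $1$ (so $\lfloor N/p^e\rfloor=p^l$ for $e=1,\dots,t$), and every deficit is maximal. But ``$\lfloor N/p^e\rfloor=p^l$'' for two consecutive $e$ cannot both hold, so $t=1$ and $\lfloor\log_p N\rfloor=l+1$; the maximal-deficit condition at $e=1$ then says exactly one $j_k$ lies in $\{p,\dots,2p-1\}$ and the others in $\{1,\dots,p-1\}$, and vanishing of $\sum_i\lfloor j_i/p^e\rfloor$ for $e\ge2$ says all $j_i<p^2$. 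Comparing $N\ge p^{l+1}$ with the maximal such sum $(2p-1)+(p^l-1)(p-1)$ forces $p^l\le2p-1$, hence $l=1$; and then $N\le(2p-1)+(p-1)^2=p^2\le N$ gives $N=p^2$, $j_k=2p-1$, and $j_i=p-1$ for $i\ne k$ — the second case. If $m=p^l+1$ one checks that for $t\ge1$ equality cannot occur with the basic estimate as the binding bound, so it occurs only in the splitting regime, where tracing the lower bound for $v_p(j_M!)$ again collapses things to $t=1$, one ``big'' part $j_k\in\{p,\dots,2p-1\}$, all others $\le p-1$, and then $l=1$ and $p^2\le N\le p^2+p-1$ — the third case. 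Each of the three configurations is verified to give equality by a direct computation.

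The step I expect to be the main obstacle is the $m=p^l+1$ branch of the inequality (and of the equality argument) in the ranges where $N$ is at or just above a power of $p$: there the global superadditivity estimate is short by exactly one, the missing factor of $p$ must be extracted from the largest part $j_M$, and when that single part is not enough (small $p$) one must instead argue that several parts are then forced to be large. The other delicate point is keeping the equality bookkeeping honest — in particular verifying that two constraints ``$\lfloor N/p^e\rfloor=m$'' cannot coexist, which is exactly what collapses the situation to $l=t=1$.
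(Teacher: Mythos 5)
Your basic framework (Legendre's formula plus superadditivity of the floor, giving $v_p(j_1!\cdots j_m!)\ge\sum_{e\ge1}\max\bigl(0,\lfloor N/p^e\rfloor-(m-1)\bigr)$ with $N=j_1+\dots+j_m$) is sound, and your $m=p^l$ branch is correct in both the inequality and the equality analysis; it is a genuinely different bookkeeping from the paper's, which instead sets $r_i=\max\{s:p^s\le j_i\}$ and plays $v_p(j_i!)\ge r_i$ against $j_i\le p^{r_i+1}-1$. Both routes collapse the equality case to $t:=v_p(\lfloor N/p^l\rfloor)\le 1$ and finish with the same size comparison $p^{l+1}\le N\le 2p-1+(p^l-1)(p-1)$.

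The gap is in the $m=p^l+1$ branch. The patch you propose for the inequality --- split off $j_M=\max_i j_i$, apply the $m=p^l$ case to the remaining parts to get $v_p(\prod_{i\ne M}j_i!)\ge v_p(\lfloor(N-j_M)/p^l\rfloor)$, and add $v_p(j_M!)$ --- does not work, because $v_p(\lfloor(N-j_M)/p^l\rfloor)$ bears no useful relation to $t$. Concretely, for $p=2$, $l=1$, $m=3$, $(j_1,j_2,j_3)=(3,3,3)$ one has $N=9$ and $t=v_2(4)=2$, but your bound yields $v_2(\lfloor 6/2\rfloor)+v_2(3!)=0+1=1<2$ (the lemma's inequality of course holds here, $v_2(3!^3)=3$). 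The repair lives inside your own basic estimate: you only invoked it through the weaker count $\#\{e:\lfloor N/p^e\rfloor\ge m\}$, but the single term $e=1$ already gives $v_p(\prod_i j_i!)\ge\lfloor N/p\rfloor-p^l\ge p^{l+t-1}-p^l=p^l(p^{t-1}-1)\ge 2^t-2\ge t$ whenever $t\ge 2$ (using $N\ge p^{l+t}$), while for $t=1$ one only needs a single $j_i\ge p$, which follows from $N\ge p^{l+1}>(p^l+1)(p-1)$. With that in hand, the $m=p^l+1$ equality analysis --- which you only assert (``one checks that\dots'') --- goes through exactly as in your $m=p^l$ case: $t\ge 3$ is impossible since $2^t-2>t$; $t=2$ forces $p=2$, $l=1$, $\sum_i\lfloor j_i/2\rfloor=2$ and all $j_i\le 3$, hence $N\le 7<8$, a contradiction; and $t=1$ with $v_p(\prod_i j_i!)=1$ forces one $j_k\in[p,2p-1]$ and the rest $\le p-1$, whence $p^{l+1}\le N\le 2p-1+p^l(p-1)$ gives $l=1$ and $p^2\le N\le p^2+p-1$. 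So the plan is completable, but as written the $m=p^l+1$ inequality rests on a step that fails, and the corresponding equality case is not yet an argument.
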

\begin{proof}
{\em Case $m=p^l$.}
Let 
\begin{equation*}
r_i=\max\{s:\ p^s\leq j_i\}, \ \ i=1,\dots,m,  \quad \mbox{and}  \quad
r=\max\{r_1,\dots, r_m\}.
\end{equation*}
Then 
\begin{equation*}
v_p(j_1!\cdots j_m!) \geq r_1+\ldots+r_m\geq r.
\end{equation*}
On the other hand, $j_i\leq p^{r_i+1}-1\leq p^{r+1}-1$. Hence,
\begin{equation*}
\left\lfloor\frac{j_1+\ldots+j_m}{p^l}\right\rfloor\leq 
\left\lfloor\frac{m\cdot(p^{r+1}-1)}{p^l}\right\rfloor = p^{r+1}-1,
\end{equation*}
and thus 
\begin{equation*}
v_p\left(\left\lfloor
\frac{j_1+\ldots+j_m}{p^l}
\right\rfloor
\right)\leq r
\leq v_p(j_1!\cdots j_m!).
\end{equation*}
Now assume that equality holds. Then 
 $r_k=r$ for some $k$ and $r_i=0$
for $i\neq k$. Furthermore,
$v_p(j_k!)=r$ and thus $j_k\leq 2p-1$. Hence, $r=0$ or $r=1$.
In the case $r=0$ it follows $j_i\leq p-1$, $i=0,\dots,m$.
Assume $r=1$. Then
\begin{equation*}
v_p\left(\left\lfloor
\frac{j_1+\ldots+j_m}{p^l}
\right\rfloor
\right)=1, \ \ \mbox{thus} \ \
\frac{j_1+\ldots+j_m}{p^l}\geq\left\lfloor\frac{j_1+\ldots+j_m}{p^l}\right\rfloor\geq p
\end{equation*}
and
\begin{equation*}
p^{l+1}\leq j_1+\dots+j_m\leq 2p-1+(p^l-1)(p-1) = p^{l+1}-p^l+p
\end{equation*}
or $p^l\leq p$, which implies $l\leq 1$, and thus $l=1$ because
$l\geq 1$ by assumption.
For $l=1$ we have $j_1+\ldots+j_m=p^2$, which can only hold for
$j_k=2p-1$ and $j_i=p-1$, $i\neq k$. 

{\em Case $m=p^l+1$.} We first assume that 
$r_k=r$ and $r_i=0$, $i\neq k$ for some  $k=1,\dots,m$, where
$r_i$, $r$ are defined as before.
Also as before, this implies $r=0$ or $r=1$.
For the case $r=0$ we have
$j_i\leq p-1$, $i=0,\dots,m$. Thus,
\begin{equation*}
\left\lfloor\frac{j_1+\ldots+j_m}{p^l}\right\rfloor\leq 
\frac{(p^l+1)(p-1)}{p^l} < p
\end{equation*}
and
\begin{equation*}
v_p\left(\left\lfloor\frac{j_1+\ldots+j_m}{p^l}\right\rfloor\right)
= 0 = v_p(j_1!,\dots, j_m!).
\end{equation*}
Note that this is one of the cases for which equality holds.

For the case $r=1$ we have
$p\leq j_k\leq 2p-1$ and  $j_i\leq p-1$, $i\neq k$ for some $k=1,\dots,m$. Thus,
\begin{equation*}
\left\lfloor\frac{j_1+\ldots+j_m}{p^l}\right\rfloor\leq 
\frac{2p-1+p^l(p-1)}{p^l} \leq p+1
\end{equation*}
and
\begin{equation*}
v_p\left(\left\lfloor\frac{j_1+\ldots+j_m}{p^l}\right\rfloor\right)
\leq 1 = r = r_1+\ldots+ r_m = v_p(j_1!\cdots j_m!).
\end{equation*}
If equality holds, then 
\begin{equation*}
p^{l+1}\leq j_1+\dots+j_m \leq 2p-1+p^l(p-1)=p^{l+1}-p^l+2p-1
\end{equation*}
or $p^l\leq 2p-1$, which implies $l=1$ and $p^2\leq j_1+\ldots+j_m\leq p^2 +p-1$.

We now assume the opposite as before, namely  that $r_k=r$, $r_i=0$, $i\neq k$ for some $k=0,\dots,m$ does {\em not} hold.
Then 
$$v_p(j_1!\cdots j_m!)\geq r_1+\ldots+r_m \geq r+1.$$
On the other hand,
\begin{equation*}
\left\lfloor\frac{j_1+\ldots+j_m}{p^l}\right\rfloor\leq 
\frac{(p^l+1)(p^{r+1}-1)}{p^l} = 
p^{r+1}-1+p^{r+1-l}-p^{-l}\leq p^{r+2}-1.
\end{equation*}
Thus,
\begin{equation*}
v_p\left(\left\lfloor
\frac{j_1+\ldots+j_m}{p^l}
\right\rfloor
\right)\leq r+1
\leq v_p(j_1!\cdots j_m!).
\end{equation*}
If we assume that equality holds here, then $r_k=r$ for some $k$,
$r_h=1$ for some $h\neq k$, and $r_i=0$ for $i\neq k,h$. From
$v_p(j_h!)=1$ it follows $p\leq j_h\leq 2p-1$, and 
from
$v_p(j_k!)=r$ it follows $j_k\leq 2p-1$, and thus  $r\leq 1$, so $r=1$ 
because $r\geq r_h=1$.
Together with $j_h\leq 2p-1$ and $j_i\leq p-1$ for $i\neq k,h$,
and $v_p(\lfloor(j_1+\ldots+j_m)/p^l\rfloor)=2$,
this implies
\begin{equation*}
p^2\leq\left\lfloor\frac{j_1+\ldots+j_m}{p^l}\right\rfloor
\leq \frac{2(2p-1)+(p^l-1)(p-1)}{p^l}
= p-1+\frac{3p-1}{p^l},
\end{equation*}
and thus $p^{l+2}\leq p^{l+1}-p^l+3p-1$, which is impossible for 
$p\geq 2$ and $l\geq 1$.
\end{proof}

\begin{remark}\label{rem:vp_fac_sum}
If $p=2$, $l=1$, $m=p^l=2$, then 
(\ref{eq:case_equality_l1_mpl}) holds 
precisely for $(j_1,j_2)\in\{(1,3),(3,1)\}$.

If $p=2$, $l=1$, $m=p^l+1=3$, then  (\ref{eq:case_euqality_l1}) holds 
precisely for $(j_1,j_2,j_3)\in\{(1,1,2),(1,2,1),(2,1,1),(1,1,3),(1,3,1),(3,1,1)\}$. 

If $p\geq 3$, $l=1$, $m=p^l+1=p+1$, then (\ref{eq:case_euqality_l1}) can hold only if there exist
at least two indices $s\neq t$, such that $j_s,j_t\in\{2,\dots,p-1,p+1,\dots,2p-1\}$, because on the one hand
it cannot be the case that
 $j_i=1$ for all $i\neq k$ in (\ref{eq:case_euqality_l1}), which
 would  imply $j_1+\ldots+j_m=m-1+j_k\leq 3p-1<p^2$ contradicting
$j_1+\ldots+j_m\geq p^2$, and, on the other hand, if
 $2\leq j_h\leq p-1$ for some $h\neq k$ but
$j_i=1$ for $i\neq h,k$, then $j_k\geq p+1$, because $j_k=p$ would
imply $j_1+\ldots+j_m =  m-2 + j_h+j_k\leq 3p-2$ contradicting 
$j_1+\ldots+j_m \geq p^2$ again.
\end{remark}
\section{Numerical illustrations}\label{Sec:numerical_illustrations}

We verify the results of Propositions~\ref{prop:sum_m_2_mod_p} and \ref{prop:main_prop} 
by some explicit numerical calculations.
For given $n$, $p$ and suitable $q_1,\dots,q_m$ we compute 
the coefficient $c(q_1,\dots, q_m)$ using, e.g., Algorithm~\ref{alg:BCH}
from the appendix, and determine an integer $\hat{a}$,  which according to our theory  is expected to  be $\equiv a$ (or $\equiv -a$ in some cases) modulo $p$,
where the 
integer $a$ is   defined in these propositions. 
The details for the computation of $\hat{a}$ 
for given prime $p\geq 2$ and degree $n\geq p$
are  as follows.
\begin{itemize}
\setlength{\itemsep}{0.5pt}
  \setlength{\parskip}{0pt}
\item Determine $q_1,\dots, q_m$
\begin{itemize}
\setlength{\itemsep}{0.5pt}
  \setlength{\parskip}{0.5pt}
\item for $l=0$,  $n\geq p$,  $m=2$ according to 
 Algorithm~\ref{alg:witness}, lines 14--15;
\item for $l=1$, $m=2$ according to 
 Algorithm~\ref{alg:witness}, lines 18--26;
\item for $l\geq 1$, $m=p^l$ or $m=p^l+1$ according to 
Algorithm~\ref{alg:witness}, lines 33--34;\footnote{
If $l=1$, Algorithm~\ref{alg:witness} would branch to the simpler construction for the $q_i$  in lines 18--26. But the construction of lines 33--34 
works also for $l=1$.
Systematic computations that verify this construction for the case 
$l\geq 2$ are hardly feasible, as discussed in Section~\ref{SubSec:num_ill_l_gt_2} below. So we have to 
rely on the case
$l=1$ for such verifications.
}
 note that for $m=p^l+1$, $s_p(n)\neq p^l$ is required.
\end{itemize}
\item Compute $c=c(q_1,\dots,q_m)$ using, e.g.,  Algorithm~\ref{alg:BCH} from the appendix.
\item Set $\widetilde{c} = (-1)^nq_1!\cdots q_m!\cdot c$ and
determine integers $u$, $v$ and an exponent $e\geq 0$ such 
that $p\nmid u,v$ and $\widetilde{c}=\frac{u}{p^e v}$.
\item 
Set $\hat{a}=u\bar{v}\ \mathrm{mod}\ p$ and $\hat{U}=\widetilde{c}-\frac{\hat{a}}{p^e}$, 
where $\bar{v}$ is an inverse of $v$ modulo $p$.
\end{itemize}
 With an integer $y$ such that $v\bar{v}=1+yp$ 
 (note that $v\bar{v}\equiv 1 \ (\mathrm{mod}\ p)$)
 we have
$$\hat{U}=\widetilde{c}-\frac{\hat{a}}{p^e}=\frac{u}{p^ev}-\frac{u\bar{v}}{p^e}=\frac{uy}{p^{e-1}v},$$
 and thus $v_p(\hat{U})>-e$.
It follows that $\hat{a}$ is the  unique integer
$0\leq\hat{a}\leq p-1$ that satisfies
$$ \widetilde{c} = \frac{\hat{a}}{p^e}+\hat{U},\quad
 \mbox{for some} \ \hat{U}\in\mathbb{Q},\ v_p(\hat{U})>-e.$$

For $l\leq 1$, $m=2$
 we expect
$$
e=1\quad\mbox{and}\quad \hat{a} \equiv -(-1)^{q_2} a\equiv (-1)^{n+1}\ \ (\mathrm{mod}\ p)
$$
by Proposition~\ref{prop:sum_m_2_mod_p} and Remark~\ref{rem:sum_m_2_mod_p}.
Here we have used that $q_2$ is a multiple of $p-1$ by construction, and thus it is even for $p\neq 2$.

For $l\geq 1$, $m=p^l$ we expect
\begin{equation}\label{eq:expect_m}
e=l\quad\mbox{and}\quad\hat{a} \equiv a \equiv 2\left(\frac{p-1}{2}\right)^n\ \
(\mathrm{mod}\ p),\quad\mbox{if $p\neq 2$ and $n$ odd}
\end{equation}
by Proposition~\ref{prop:main_prop}. Note that in this case 
$c(q_1,\dots,q_m)=0$ if $p\neq 2$ and $n$ even, cf.~Remark~\ref{rem:1}.

For  $l\geq 1$, $m=p^l+1$ we similarly expect
\begin{equation}\label{eq:expect_m_plus_1}
e=l\quad\mbox{and}\quad\hat{a} \equiv a \equiv -\left(\frac{p-1}{2}\right)^{n-1}\ \
(\mathrm{mod}\ p),\quad\mbox{if $p\neq 2$.}
\end{equation}

\subsection{The case $l(n,p)\leq 1$} 
\begin{table}[!t]
{\small
\begin{center}
\addtolength{\tabcolsep}{-3.4pt}  
\begin{tabular}{cccccccc}
\hline
$n$ & $p$ & $l$ & $m$  & $(q_1,\dots,q_m)$ & $c(q_1,\dots,q_m)$ & $e$ & $\hat{a}$  \\
\hline
$26$&	$7$&	$1$&	$2$&	$(14, 12)$&	$-63102076049869/846912068365871834726400000$&	$1$&	$6$\\
&	$7$&	$1$&	$7$&	$(14, 7, 1, 1, 1, 1, 1)$&	$0$&	$0$&	$0$\\
&	$7$&	$1$&	$8$&	$(7, 7, 7, 1, 1, 1, 1, 1)$&	$5260127/12693891496366080000$&	$1$&	$4$\\
\hline
$27$&	$7$&	$1$&	$2$&	$(21, 6)$&	$-6333157/33967061565476143104000$&     $1$&     $1$\\
&	$7$&	$1$&	$7$&	$(21, 1, 1, 1, 1, 1, 1)$&	$-1970755117/6416000517923271475200000$&	$1$&	$5$\\
&	$7$&	$1$&	$8$&	$(14, 7, 1, 1, 1, 1, 1, 1)$&	$2609686559/51142033113881149440000$&	$1$&	$5$\\
\hline
$28$&	$7$&	$0$&	$2$&	$(22, 6)$&	$252293307089/10162944820390462016716800000$&	$1$&	$6$\\
\hline
\end{tabular}
\addtolength{\tabcolsep}{3.4pt}
\caption{Results of various computations for the case $l=l(n,p)\leq 1$.}\label{tbl:l1}
\end{center}
}
\end{table}

Some results of such computations for $l=(n,p)\leq 1$ with $p=7$ can 
be found in Table~\ref{tbl:l1}. For one of its
 entries we  give  the details of the computations below.
It is readily verified that  all results for $e$ and $\hat{a}$ 
in the table are as expected.
 
\medskip 
 
\noindent{\bf Example:} 
We consider 
$n=27=3\cdot 7+6$, $p=7$, $l=l(n,p)=\lfloor\log_7(3+6)\rfloor=1$, $m=p^l+1=8$.
\begin{itemize}
\setlength{\itemsep}{0.5pt}
  \setlength{\parskip}{0.0pt}
\item
$n=27=\underbrace{7+7}_{q_1}
+\underbrace{7}_{q_2}
+\underbrace{1}_{q_3}
+\underbrace{1}_{q_4}
+\underbrace{1}_{q_5}
+\underbrace{1}_{q_6}
+\underbrace{1}_{q_7}
+\underbrace{1}_{q_8}$\\
such that
$(q_1,\dots,q_m)=(14,7,1,1,1,1,1,1).$
\item
$c=c(14,7,1,1,1,1,1,1)=2609686559/51142033113881149440000.$
\item
$\widetilde{c}=-14!\,7!\cdot c= -2609686559/116396280 = \frac{u}{p^e v}$
with
$e=1$, $u=-2609686559\equiv 3\ (\mathrm{mod}\ 7)$, $v=16628040\equiv 2\ (\mathrm{mod}\ 7)$.
\item
$\bar{v}\equiv 4\ (\mathrm{mod}\ 7)$; thus, $\hat{a}= u\bar{v}\ \mathrm{mod}\ 7 = 5$, $\hat{U}=\widetilde{c}-\frac{5}{7}=-384689537/16628040$. 
\end{itemize} 
We verify  (\ref{eq:expect_m_plus_1}),
$$a\equiv-\left(\frac{p-1}{2}\right)^{n-1}=-3^{26}=-(3^6)^4\cdot 3^2\equiv -9 \equiv 5
\equiv \hat{a} \ \ (\mathrm{mod}\ 7),$$ where we have used 
$3^6=3^{7-1}\equiv 1 \ (\mathrm{mod}\ 7)$ by
Fermat's little theorem.

\subsection{The case $l(n,p)\geq 2$}\label{SubSec:num_ill_l_gt_2}
It can be shown
that the smallest degree $n$ 
 for which
$l(n,p)$ is equal to a  given number $l\geq 2$ is given by
$
n=2p^x-1$,  where $x=\frac{p^l-1}{p-1}$.
Some values of these degrees are shown in the following table.
\begin{center}
$\min\{n:\ l(n,p)= l\}$:\qquad
\begin{tabular}{c|ccc}
$p$ & $l=2$ & $l=3$ & $l=4$ \\
\hline
$2$ & $15$ & $255$ & $65535$ \\
$3$ & $161$ & $3188545$ & $\approx 2.43\cdot 10^{19}$ \\
$5$ & $31249$ & $\approx 9.31\cdot 10^{21}$ & $\approx 2.19\cdot 10^{109}$
\end{tabular}
\end{center}
Explicit computations with 
such degrees are obviously 
impossible in most cases. 
Some results of feasible computations can be found in Table 2.
\begin{table}[!t]
{\small
\begin{center}
\addtolength{\tabcolsep}{-3.4pt}  
\begin{tabular}{cccccccc}
\hline
$n$ & $p$ & $l$ & $m$  & $(q_1,\dots,q_m)$ & $c(q_1,\dots,q_m)$ & $e$ & $\hat{a}$  \\
\hline
$161$&	$3$&	$2$&	$9$&	$(81, 27, 27, 9, 9, 3, 3, 1, 1)$&	$\mbox{(168-digits number})/(\mbox{248-digits number})$&	$2$&	$2$\\
$242$&	$3$&	$2$&	$10$&	$(81, 81, 27, 27, 9, 9, 3, 3, 1, 1)$&	$\mbox{(288-digits number})/(\mbox{408-digits number})$&	$2$&	$2$\\
$255$&	$2$&	$3$&	$8$&	$(128, 64, 32, 16, 8, 4, 2, 1)$&	$\mbox{(330-digits number})/(\mbox{460-digits number})$&	$3$&	$1$\\
\hline
\end{tabular}
\addtolength{\tabcolsep}{3.4pt}
\caption{Results of various computations for the case $l=l(n,p)\geq 2$.
Here $n=242$ is the smallest {\em even}
degree that satisfies $l(n, 3)=2$.
The results 
conform with (\ref{eq:expect_m}), (\ref{eq:expect_m_plus_1}), and (\ref{eq:cauv1}), case $p=2$, respectively.
}\label{tbl:l23}
\end{center}
}
\end{table}

\subsection{A simpler construction for the case $l(n,p)\geq 2$ ?}
If $l(n,p)=1$, then there exists a simpler method 
than the one of Section~\ref{SubSec:l_gt_1} for 
 obtaining 
a partition $(q_1,\dots, q_m)$ of $n$ ($m\geq 1$, $q_i\geq 1$, $q_1+\ldots +q_m=n$) that satisfies (\ref{eq:vp_den_c}), namely the method of 
Section~\ref{SubSec:l_leq_1} which produces such a partition 
of the form $(n-k, n)$. 
If on the other hand $l(n,p)\geq 2$, then no such partition of length $m=2$
can exist. This follows from the explicit formula (\ref{eq:m2_expl})
involving Bernoulli numbers  and the fact that the denominators of the
Bernoulli numbers are square-free, which is a consequence of the
von Staudt–Clausen theorem,~cf.~\cite{Carlitz}.

We define the sets
\begin{align*}
Q(n,p) =\big\{(q_1, \dots, q_m):\ &m\geq 1,\ q_1\geq q_2\geq\ldots \geq q_m\geq 1,
\ q_1+\ldots+q_m=n, \\
&v_p(\denom(c(q_1,\dots,q_m)))=v_p(n!)+l(n,p)\big\}
\end{align*}
consisting of all partitions $(q_1, \dots, q_m)$ of $n$ in descending order 
that satisfy (\ref{eq:vp_den_c}).
Note that because of the invariance of $c(q_1,\dots,q_m)$ under
permutations of the $q_i$ (cf.~Remark~\ref{rem:1}),  all possible values for 
the coefficients of degree $n$  already occur under these special
ones corresponding to partitions of $n$ in descending order.

In particular, for $p=2$ and degrees $n=15,23,27,29,30,31$  
that satisfy $l(n,2)=2$, we obtain\footnote{
Without much sophistication, 
we compute the sets $Q(n,p)$
by an exhaustive search under all possible partitions. For example, for
$n=31$, there are 6842 partitions in descending order that have to be 
examined.
For $n=161$, which is the smallest degree $n$ satisfying $l(n,3)=2$, the number
of such partitions is 118159068427, a number far to large 
for a exhaustive search to be feasible.
 Therefore, we limit ourselves to the case $p=2$.
}
\begin{align*}
&Q(15, 2) = \{(8, 4, 2, 1)\},\ \
Q(23, 2) = \{(16, 4, 2, 1)\},\ \
Q(27, 2) = \{(16, 8, 2, 1)\},\\
&Q(29, 2) = \{(16, 8, 4, 1)\},\ \
Q(30, 2) = \{(16, 8, 4, 2)\},\ \
\end{align*}
and
\begin{align*}
Q(31, 2) = \big\{&(24, 4, 2, 1),\
(20, 8, 2, 1),\
(18, 8, 4, 1),\
(17, 8, 4, 2),\
(16, 12, 2, 1),\\
&(16, 10, 4, 1),\
(16, 9, 4, 2),\
(16, 8, 6, 1),\
(16, 8, 5, 2),\
(16, 8, 4, 3)\big\}.
\end{align*}
Here for each  $n\in\{15,23,27,29,30\}$ the set $Q(n,2)$ consists of a single partition of $n$, which therefore must be the one defined by Algorithm~\ref{alg:witness}, lines 33--34 (which is in descending order
by construction).
Permuting the components of the partitions in $Q(31,2)$ in such a way that the powers of 2
appear from position 2 in descending order we obtain
\begin{align*}
\widetilde{Q}(31,2) =\big\{&
(24,  4, 2, 1),\
(20,  8, 2, 1),\
(18,  8, 4, 1),\
(17,  8, 4, 2),\
(12, 16, 2, 1),\\
&(10, 16, 4, 1),\
(9,  16, 4, 2),\
(6,  16, 8, 1),\
(5,  16, 8, 2),\
(3,  16, 8, 4)\big\}.
\end{align*}
Now each of these partitions $(q_1,\dots, q_m)$ is of the form 
described in Section~\ref{SubSec:l_gt_1} such that Proposition~\ref{prop:main_prop} applies to them.

Thus, at least in the few cases just discussed, the construction of 
the $q_1,\dots,q_m$ 
in Section~\ref{SubSec:l_gt_1} is essentially the only possible one.
In any case, this suggests that for $l(n,p)\geq 2$, this construction cannot be significantly simplified.

\appendix
\section{An efficient algorithm for the computation of  BCH coefficients}
\begin{algorithm}\label{alg:BCH} 
\caption{Efficient computations of BCH coefficients}
\DontPrintSemicolon
\KwIn{$(q_1,\dots,q_m)\in\mathbb{N}^m_{>0}$, $\mathit{Afirst}\in\{\mathrm{true}, \mathrm{false}\}$}
\KwOut{$\coeff(w,\log(\ee^{\AA}\ee^{\BB}))$ for $w=\AA^{q_1}\BB^{q_2}\cdots$  or $w=\BB^{q_1}\AA^{q_2}\cdots$ depending on $\mathit{Afirst}$ }
$N:=\sum_{i=1}^mq_i$\;
$d:=N!\cdot d_N$\;
$C:=(0)\in\mathbb{Z}^{N\times N}$\;
$\mathit{Acurrent}:=\mathit{Afirst}$\;
\If{$m$ {\rm is even}}{$\mathit{Acurrent}:={\rm\bf not}\,  \mathit{Afirst}$}
$n:=0$\;
\For{$i:=m,m-1,\dots,1$}
{ \label{line:9}
	\For{$r:=1,\dots,q_i$}
	{
		$n:=n+1$\;
		$h:=0$\;
		\uIf{$i=m$}
		{
			$h:=d/n!$\;
		}
		\ElseIf{$\mathit{Acurrent}$ {\rm\bf and} $i=m-1$}
		{
			$h:=d/(r!q_{m}!)$\;
		}
		$C_{1,n}:=h$\;
		\For{$k:=2,\dots,n-1$}
		{
			$h:=0$\;
			\For{$j:=1,\dots,r$}
			{
				\If{$n>j$ {\rm\bf and} $C_{k-1,n-j}\neq 0$}
				{
					$h:=h+C_{k-1,n-j}/j!$\;
				}
			}
			\If{$\mathit{Acurrent}$ {\rm\bf and} $i\leq m-1$}
			{
				\For{$j:=1,\dots,q_{i+1}$}
				{
					\If{$n>r+j$ {\rm\bf and} $C_{k-1, n-r-j}\neq 0$}
					{
						$h:=h+C_{k-1,n-r-j}/(r!j!)$\;
					}
				}				
			}
			$C_{k,n}:=h$\;
		}
		$C_{n,n} := d$\;		
	}	
	$\mathit{Acurrent}:={\rm\bf not}\, \mathit{Acurrent}$\;
}
{\bf return} $\frac{1}{d}\sum_{k=1}^N(-1)^{k+1}C_{k,N}/k$\;
\end{algorithm}
In Algorithm~\ref{alg:BCH} we present a  new 
  method
for the efficient computation of BCH coefficients, using a
self-explanatory
pseudocode, which can  straightforwardly
be implemented in any
general purpose programming language\footnote{Depending on the available integer data type,
the  size of the degrees $N$ 
may be limited. 
For standard 64-bit integers, $N\leq 19$, and for 128-bit integers
(which, e.g., are available as numbers of type \texttt{\_\_int128\_t}
for many compilers for the C programming language on modern computer architectures), $N\leq 30$.
Higher degrees usually require a library for multi-precision integer arithmetic.} or 
any computer algebra system.
%
%
An implementation in the Julia programming language is available at \cite{HHjulia}.
The following comments 
should provide sufficient evidence for the correctness of the algorithm. 
\begin{description}
\setlength{\itemsep}{2pt}
  \setlength{\parskip}{2pt}
\item[\it Input:] 
We consider the  word 
$w=\AA^{q_1}\BB^{q_2}\cdots (\AA\lor\BB)^{q_m}$ 
or
$w=\BB^{q_1}\AA^{q_2}\cdots (\BB\lor\AA)^{q_m}$ 
as a concatenation of $m$ alternating  blocks of  $\AA$s or $\BB$s whose lengths are 
$q_1,\dots,q_m$.
The boolean variable $\mathit{Afirst}$ indicates whether the first block
is an $\AA$-block (or otherwise a  $\BB$-block).
\item[\it Line 1:]$N=q_1+\ldots+q_m$ is the length of the word $w$.
\item[\it Line 2:] $ d=n!\,d_N$ is the common denominator for all coefficients of degree $\leq N$ defined by (\ref{eq:d_n}).
\item[\it Line 3:] The array $(C_{k,n})\in\mathbb{Z}^{N\times N}$ 
is initialized to zero. It will eventually contain 
 $C_{k,n} = d\cdot\coeff(v(n), Y^k)$, $k=1,\dots,n$, $n=1,\dots,N$,
where 
\begin{equation*}
Y=\ee^{\AA}\ee^{\BB}-1=\sum_{i+j>0}\frac{1}{i!j!}\AA^i\BB^j,
\end{equation*}
and $v(n)=w_{N-n+1}\cdots w_N$ is the right subword of $w=w_1\cdots w_N$ 
of length $n$
starting
at position $N-n+1$.

\item[\it Lines 9--38:] 
The outermost loop over $i$ processes the $m$ blocks in reverse order.
The boolean variable $\mathit{Acurrent}$ indicates whether the  current $i$-th block
is an $\AA$-block.

\item[\it Lines 10--36:]The loop over $r=1,\dots,q_{i}$ combines
with the outer loop over $i$ to form a loop over 
$n=r+q_{i+1}+\ldots+q_m$ which processes the right subwords $v(n)$
of lengths $n$.

\item[\it Lines 12--18:]If $k=1$ then the current right subword $v(n)$ can only contribute 
to $C_{k,n}=C_{1,n}=d\cdot\coeff(v(n),Y)$,
if it has the form $v(n)=\AA^s\BB^t$ with $s+t=n$, and thus if it is contained in the last two blocks.
%
This contribution is $d/n!$ if
 $v(n)$ is entirely contained  in the last (i.e., the $m$-th) block
 such that $v(n)=\AA^n$ or $v(n)=\BB^n$,
 or it
is $d/(r!q_m!)$ if $v(n)$ is contained in the last two blocks,
where the next to last (i.e., the $(m-1)$-th) block has to be an
$\AA$-block such that $v(n)=\AA^r\BB^{q_m}$.

\item[\it Lines 19--34:]Let $u(n,j)$ denote the left subword of $v(n)$ of
length $j$ such that
$$v(n)=u(n,j)v(n-j),\quad j=0,\dots,n.$$
For $k=2,\dots, n-1$ we have
$$
\coeff(v(n),Y\cdot Y^{k-1})
=\sum_{j=0}^n\coeff(u(n,j),Y)\cdot\coeff(v(n-j),Y^{k-1}).
$$
Here we have $\coeff(v(n-j),Y^{k-1})=0$  for $j=n$.
Similarly as before (cf.~lines 12--18), 
we have
$\coeff(u(n,j),Y)\neq 0$ only if $j\geq 1$ and if either $u(n,j)$ is 
entirely contained in the current $i$-th block
(or, more precisely, the current right subblock of length $r$ of the $i$-th block),
or if it is entirely 
contained in the  union of the $i$-th and the $(i+1)$-th block, where
the   $i$-th block has to be an $\AA$-block.
In the former case $u(n,j)=\AA^j$ or $u(n,j)=\BB^j$, $j=1,\dots, r$ such that 
$\coeff(u(n,j),Y)=1/j!$, and in the latter case $u(n,j)=
\AA^r\BB^{j_1}$ with $r+j_1=j$, $j_1=1,\dots,q_{i+1}$
such that $\coeff(u(n,j),Y)=1/(r!j_1!)$.
It follows 
\begin{align*}
C_{k-1,n} &= d\cdot \coeff(v(n),Y^k)
= \sum_{j=1}^r\frac{1}{j!}C_{k-1,n-j}+
   f_i\sum_{j_1=1}^{q_{i+1}}\frac{1}{r!j_1!}C_{k-1,n-r-j_1} ,
\end{align*}
where $f_i=1$ if the $i$-th block is an $\AA$-block and 
$f_i=0$ otherwise. This sum is computed in lines 20--33.
Note that here $C_{k-1,n-j}$ and $C_{k-1,n-r-j_1}$ either are understood to
be $=0$ if the second index is 0,
or 
they have already been computed
during a previous pass of the loop over $n$ (i.e., the loops over
$i$ and $r$ combined).

Obviously the tests for $C_{k-1,n-j}\neq 0$ respectively
$C_{k-1,n-r-j}\neq 0$ in lines 22 and 28 are  not strictly necessary, 
but are there for efficiency reasons.

\item[\it Line 35:] 
A word of degree $n$ occurs in
$Y^n$ if and only if for all $i=1,\dots, n$,
its $i$-th letter corresponds to exactly one term of degree 1 
of the $i$-th factor $Y$ of $Y^N$. Thus, $v(N)$ occurs in $Y^n$ exactly
once and with coefficient $1$ so that $C_{n,n} = d\cdot\coeff(v(n),Y^n)=d$.

Note that the case $k=n$ could also be handled by the above loop  over $k$. 
Here it is handled separately for efficiency and because it is so simple.

\item[\it Line 39:] The final result is computed 
according to 
$$\coeff(w,\log(\ee^{\AA}\ee^{\BB})) = 
\sum_{k=1}^N\frac{(-1)^{k+1}}{k}\coeff(w, Y^k) 
=\frac{1}{d}\sum_{k=1}^N\frac{(-1)^{k+1}}{k}C_{k,N}.
$$
\end{description}

The main feature of the algorithm is that 
it performs all of its computations in integer arithmetic. This
means in particular, that the divisions
in lines 14, 16, 23, 29, and the divisions by $k$ in line 39
never have a remainder. (Of course, this does not apply to the
final division by $d$ in line 39.) To prove this, it is not enough
to know that the final result is a rational number with a denominator that
is divisible by $d=N!d_N$. It must also be ensured that during  the calculation  no intermediate results not representable as integers can occur, 
which cancel out at the end. 
Without going into details, this holds because
the computations of the algorithm follow  the 
same pattern as
the calculation of the common denominator  $d=D_N=N!d_N$
 in the proof of  \cite[Proposition~1]{HHdenom},
 where the generic case is assumed and no cancellations are  taken into account.


\end{document}